\begin{document}

\date{\small Mathematics Subject Classification: 05D10}

% This is a file of macros and definitions that may come up in
% ANY LATEX paper.  Typically I'll use this file and some other file in the
% directory of the paper, this one for general math things, that one for
% things specific to that paper.
%
% font's used and general paper things.
%\font\tenrm=cmr10
%\font\ninerm=cmr9
%\font\eightrm=cmr8
%\font\sevenrm=cmr7
% \font\title=cmbx10 scaled \magstep1 % extra big title font
%\font\ss=cmss10 % used by \proof
%\font\smallcaps=cmcsc10 % used to label Theorems, etc.
% imhibit black bars on overflows
%
%\overfullrule=0pt
%
% today's date
%
%
% English words that I always italizice in papers.
% Some words that appear in math mode alot that I wasn roman
%

\def\poly{\mathrm{poly}}
\def\min{\mathrm{min}}
\newcommand{\diff}{{\rm diff }}
\newcommand{\thresh}{t}

\newcommand{\PP}{{\rm P}}
\newcommand{\NP}{{\rm NP}}
\newcommand{\eps}{\epsilon}
\newcommand{\st}{\mathrel{:}}
\newcommand{\kstar}{{\textstyle *}}
\newcommand{\bits}[1]{\{0,1\}^{{#1}}}
\newcommand{\bit}{\{0,1\}}
\newcommand{\nat}{{\sf N}}
\newcommand{\Z}{{\sf Z}}
\newcommand{\C}{{\sf C}}
\newcommand{\R}{{\sf R}}
\newcommand{\rat}{{\sf Q}}
\newcommand{\BPP}{{\rm BPP}}
\newcommand{\DTIME}{{\rm DTIME}}
\newcommand{\HARD}{{\rm HARD}}
\newcommand{\into}{\rightarrow}
\renewcommand{\AE}{\forall^\infty}
\newcommand{\IO}{\exists^\infty}
\newcommand{\ep}{\ep}
\newcommand{\es}{\emptyset}
\newcommand{\isom}{\simeq}
\newcommand{\nisom}{\not\simeq}
\newcommand{\lf}{\left\lfloor}
\newcommand{\rf}{\right\rfloor}
\newcommand{\lc}{\left\lceil}
\newcommand{\rc}{\right\rceil}
\newcommand{\Ceil}[1]{\left\lceil {#1}\right\rceil}
\newcommand{\ceil}[1]{\left\lceil {#1}\right\rceil}
\newcommand{\floor}[1]{\left\lfloor{#1}\right\rfloor}
\newcommand{\nth}{n^{th}}
%
% definitions with macros
%
\newcommand{\inter}{\cap}
\newcommand{\union}{\cup}
\newcommand{\sig}[1]{\sigma_{#1} }
\newcommand{\s}[1]{\s_{#1}}
\newcommand{\dash}{\hbox{-}}
\newcommand{\infinity}{\infty}
\newcommand{\ie}{\hbox{ i.e.  }}
\newcommand{\eg}{\hbox{ e.g.  }}

\newcounter{savenumi}
\newenvironment{savenumerate}{\begin{enumerate}
\setcounter{enumi}{\value{savenumi}}}{\end{enumerate}
\setcounter{savenumi}{\value{enumi}}}

\newtheorem{theoremfoo}{Theorem}[section] %by chapter in report style
\newenvironment{theorem}{\pagebreak[1]\begin{theoremfoo}}{\end{theoremfoo}}

\newtheorem{lemmafoo}[theoremfoo]{Lemma}
\newenvironment{lemma}{\pagebreak[1]\begin{lemmafoo}}{\end{lemmafoo}}

\newtheorem{corollaryfoo}[theoremfoo]{Corollary}
\newenvironment{corollary}{\pagebreak[1]\begin{corollaryfoo}}{\end{corollaryfoo}}

\newcommand{\fig}[1] %usage:\fig{file}
{
 \begin{figure}
 \begin{center}
 \input{#1}
 \end{center}
 \end{figure}
}

\newtheorem{notefoo}[theoremfoo]{Note}
\newenvironment{note}{\pagebreak[1]\begin{notefoo}\rm}{\end{notefoo}}

\newtheorem{nttn}[theoremfoo]{Notation}
\newenvironment{notation}{\pagebreak[1]\begin{nttn}\rm}{\end{nttn}}

\newtheorem{dfntn}[theoremfoo]{Definition}
\newenvironment{definition}{\pagebreak[1]\begin{dfntn}\rm}{\end{dfntn}}

\newenvironment{proof}
    {\pagebreak[1]{\narrower\noindent {\bf Proof:\quad\nopagebreak}}}{\QED}
\newenvironment{sketch}
    {\pagebreak[1]{\narrower\noindent {\bf Proof sketch:\quad\nopagebreak}}}{\QED}
\newenvironment{comment}{\penalty -50 $(*$\nolinebreak\ }{\nolinebreak $*)$\linebreak[1]\ }

\newenvironment{algorithm}[1]{\bigskip\noindent ALGORITHM~#1\renewcommand{\theenumii}{\arabic{enumii}}\renewcommand{\labelenumii}{Step \theenumii :}\begin{enumerate}}{\end{enumerate}END OF ALGORITHM\bigskip}

\newcommand{\yyskip}{\penalty-50\vskip 5pt plus 3pt minus 2pt}
\newcommand{\blackslug}{\hbox{\hskip 1pt
        \vrule width 4pt height 8pt depth 1.5pt\hskip 1pt}}
\newcommand{\QED}{{\penalty10000\parindent 0pt\penalty10000
        \hskip 8 pt\nolinebreak\blackslug\hfill\lower 8.5pt\null}
        \par\yyskip\pagebreak[1]}

\newcommand{\BBB}{{\penalty10000\parindent 0pt\penalty10000
        \hskip 8 pt\nolinebreak\hbox{\ }\hfill\lower 8.5pt\null}
        \par\yyskip\pagebreak[1]}
     
\newcommand{\PYI}{CCR-8958528}
\newtheorem{factfoo}[theoremfoo]{Fact}
\newenvironment{fact}{\pagebreak[1]\begin{factfoo}}{\end{factfoo}}
\newenvironment{acknowledgments}{\par\vskip 6pt\footnotesize Acknowledgments.}{\par}

%%%%%%%%%%%%%%%%%%%%%%%%%%%%%%%%%%%%%%%%%%%%%%%%%%%%%%%%%%%%%%%%%%%%%%%%%%%%

%%%%%%%%%% 
%%%%%%%%%%  Jim's construction macros.
%%%%%%%%%% 

%%% The follow are macros for displaying block structured programs and
%%% constructions.  Basically, they are dressed up lists, like the
%%% enumerate and itemize environments.  Use  the construction environment
%%% for the outermost ``list'' of instruction and for ``sublists'' of
%%% instructions use the block environment.  E.g., 
%%%     
%%% \begin{construction}
%%%   \item {\bf Program for} $M_{p,a}$. 
%%%   \begin{block}
%%%     \item Input $x$.
%%%     \item Instructions.  Instructions.  Instructions.  Instructions.
%%% 	      Instructions.  Instructions.  Instructions.  Instructions.
%%%     \begin{block}
%%%        \item More instructions.  More instructions.  More
%%% 		instructions.   More instructions.  
%%%        \item More instructions.  More instructions.  More
%%% 		instructions.   More instructions.  
%%%     \end{block}
%%%     \item Instructions.  Instructions.  Instructions.  Instructions.
%%%     \item Instructions.  Instructions.  Instructions.  Instructions.
%%%   \end{block}
%%%   \item {\bf End program for} $M_{p,a}$.
%%% \end{construction}

\newenvironment{construction}{\bigbreak\begin{block}}{\end{block}
    \bigbreak}

\newenvironment{block}{\begin{list}{\hbox{}}{\leftmargin 1em
    \itemindent -1em \topsep 0pt \itemsep 0pt \partopsep 0pt}}{\end{list}}

%%% If you want to label the statements/blocks in your construction,  use
%%% the lblock environment instead of the block environment and for each
%%% item macro, use \item[YOUR_LABEL].  Note that labels are
%%% do-it-yourself.
%%% 
%%% Note that in the following, the basic indentation of an lblock at
%%% level i of nesting (i>0) is (\dimen15 + i * \dimen16).  The default 
%%% value of both \dimen15 and \dimen16 is  0.75em.

\dimen15=0.75em
\dimen16=0.75em

\newcommand{\lblocklabel}[1]{\rlap{#1}\hss}

\newenvironment{lblock}{\begin{list}{}{\advance\dimen15 by \dimen16
    \leftmargin \dimen15
    \itemindent -1em
    \topsep 0pt
    \labelwidth 0pt
    \labelsep \leftmargin
    \itemsep 0pt
    \let\makelabel\lblocklabel
    \partopsep 0pt}}{\end{list}}

%%% The lconstruction is an alternative to the construction environment
%%% which lets you temporarily change the values of \dimen15 and \dimen16.

\newenvironment{lconstruction}[2]{\dimen15=#1 \dimen16=#2
  \bigbreak\begin{block}}{\end{block}\bigbreak}

\newcommand{\Comment}[1]{{\sl ($\ast$\  #1\ $\ast$)}}

%%%%%%%%%% 
%%%%%%%%%% End of Jim's construction macros.
%%%%%%%%%% 

\hyphenation{pro-gres-sions}

\newcommand{\lam}{\lambda}
\renewcommand{\ep}{\epsilon}
\newcommand{\Abar}{\overline{A}}
\newcommand{\Tbar}{\overline{T}}
\newcommand{\sm}{{\rm small}}
\newcommand{\bi}{{\rm big}}
\newcommand{\ts}{t^{\sm}}
\newcommand{\tb}{t^{\bi}}
\newcommand{\tsj}{t^{\sm,j}}
\newcommand{\tbj}{t^{\bi,j}}
\newcommand{\Bs}{B^{\sm}}
\newcommand{\Bb}{B^{\bi}}

%\centerline{\bf A Survey of Lower Bounds on the van der Waerden Numbers $W(k,2)$:}
%\centerline{\bf Randomized-Constructive and Deterministic-Constructive}
%\centerline{\bf By William Gasarch (gasarch@cs.umd.edu)}
%\centerline{\bf and Bernhard Haeupler (haeupler@mit.edu)}

\setlength{\textheight}{8.7in}
\setlength{\textwidth}{6.3in}

\renewcommand{\labelitemii}{\labelitemi}
\renewcommand{\labelitemiii}{\labelitemi}

\title{Lower Bounds on van der Waerden Numbers:\\ \large Randomized- and Deterministic-Constructive}

\author{William Gasarch\\
\small Department of Computer Science\\[-0.8ex]
\small University of Maryland at College Park\\[-0.8ex]
\small College Park, MD 20742, USA\\
\small \texttt{gasarch@cs.umd.edu}\\
\and
Bernhard Haeupler\\
\small CSAIL\\[-0.8ex]
\small Massachusetts Institute of Technology\\[-0.8ex]
\small Cambridge, MA 02130, USA\\
\small \texttt{haeupler@mit.edu}\\
}

\maketitle

\begin{abstract}
The van der Waerden number $W(k,2)$ is the smallest integer $n$ such 
that every $2$-coloring of 1 to $n$ has a monochromatic arithmetic 
progression of length $k$. The existence of such an $n$ for any $k$ is 
due to van der Waerden but known upper bounds on $W(k,2)$ are enormous. 
Much effort was put into developing lower bounds on $W(k,2)$.
Most of these lower bound proofs employ the probabilistic method often in combination with the 
Lov{\'a}sz Local Lemma. While these proofs show the existence of a 
$2$-coloring that has no monochromatic arithmetic 
progression of length $k$ they provide no efficient algorithm to find such a 
coloring. These kind of proofs are often informally called {\it 
nonconstructive} in contrast to {\it constructive} proofs that provide 
an efficient algorithm.

This paper clarifies these notions and gives definitions for 
deterministic- and randomized-constructive proofs as different 
types of constructive proofs. We then survey the literature on lower 
bounds on $W(k,2)$ in this light. We show how known nonconstructive 
lower bound proofs based on the Lov{\'a}sz Local Lemma can be made 
randomized-constructive using the recent algorithms of Moser and 
Tardos. We also use a derandomization of Chandrasekaran, Goyal and 
Haeupler to transform these proofs into deterministic-constructive proofs. 
We provide greatly simplified and fully self-contained proofs and descriptions for 
these algorithms. 
\end{abstract}

\section{Introduction}

\begin{notation}
Let $[n]=\{1,\ldots,n\}$ and $\nat^+ = \{1,2,\ldots\}$. If $k\in \nat^+$ then a $k$-AP means an arithmetic progression
of size $k$, i.e., $k$ numbers of the form $\{a,a+d,\ldots,a+(k-1)d\}$ with $a,d\in \nat^+$.
\end{notation}

Recall van der Waerden's theorem:

\begin{theorem}\label{th:vdw}
For every $k\ge 1$ and $c\ge 1$ there exists $W$ such that
for every $c$-coloring $COL:[W]\into [c]$
there exists a monochromatic $k$-AP, i.e. there are $a,d\in \nat^+$, such that
$$COL(a)=COL(a+d)= \cdots = COL(a+(k-1)d).$$
\end{theorem}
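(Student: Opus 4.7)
The plan is to prove Theorem~\ref{th:vdw} by induction on $k$, with the inductive step resting on an auxiliary inner induction that builds up partial structures called color-focused arithmetic progressions. The base case $k=1$ is trivial (any single element is a $1$-AP), and $k=2$ reduces to pigeonhole: any $c$-coloring of $[c+1]$ has two elements sharing a color, which already form a $2$-AP. For the inductive step I will assume the theorem for length $k-1$ and \emph{every} number of colors, and derive it for length $k$ and $c$ colors.

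The central definition is that $r$ monochromatic $(k-1)$-APs with starting points $a_i$ and common differences $d_i$ are \emph{color-focused} at $f$ if $a_i+(k-1)d_i=f$ for every $i$ (so $f$ sits one step past the last term of each) and the $r$ APs use pairwise distinct colors. The key observation is: given $c$ color-focused monochromatic $(k-1)$-APs in a $c$-coloring with focus $f$ inside the colored interval, the color of $f$ must coincide with the color of some $P_i$, and appending $f$ to $P_i$ yields a monochromatic $k$-AP. Thus it suffices to prove the strengthened statement that for every $1\le r\le c$ there is a finite $W(k,c,r)$ such that every $c$-coloring of $[W(k,c,r)]$ either contains a monochromatic $k$-AP, or contains $r$ color-focused monochromatic $(k-1)$-APs whose focus also lies in $[W(k,c,r)]$.

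I would prove the strengthening by induction on $r$. For $r=1$ the outer inductive hypothesis for $k-1$ with $c$ colors already supplies a monochromatic $(k-1)$-AP, which is automatically $1$-color-focused once a modest enlargement keeps the focus inside the interval. For the step from $r$ to $r+1$, I partition $[W(k,c,r+1)]$ into consecutive blocks of length $W(k,c,r)$. Each block admits one of $c^{W(k,c,r)}$ possible colorings, so treating each block as a single super-colored point and applying the outer hypothesis for length $k-1$ with that many colors, I find $k-1$ identically colored, evenly spaced blocks $B_1,\dots,B_{k-1}$ with block spacing $s$. Inside $B_1$ the inner hypothesis provides $r$ color-focused $(k-1)$-APs with focus $f_1\in B_1$; say the $i$-th has starting point $a_i$, common difference $d_i$, and color $c_i$. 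I build $r+1$ new $(k-1)$-APs that span across the blocks: for $1\le i\le r$ the $i$-th new AP starts at $a_i$ with common difference $d_i+s$, and the $(r+1)$-st starts at $f_1$ with common difference $s$. Because the blocks are identically colored, the first $r$ remain monochromatic in colors $c_i$ and the last is monochromatic in $f_1$'s color, and all $r+1$ are focused at $f_1+(k-1)s$. If $f_1$'s color equals some $c_i$, then the $i$-th original AP in $B_1$ extends (by $f_1$) to a monochromatic $k$-AP; otherwise the $r+1$ new APs are color-focused, closing the inner induction.

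The main obstacle will be bookkeeping: lining up indices so that the new focus $f_1+(k-1)s$ really lies in $[W(k,c,r+1)]$ (which roughly doubles the block-based length estimate), checking that the spanning APs of common difference $d_i+s$ stay inside the colored interval, and verifying that the recursion for $W(k,c,r)$ terminates with a finite value (it does, since each step bounds $W(k,c,r+1)$ in terms of $W(k-1,\cdot)$ and $W(k,c,r)$). None of these steps is conceptually deep, but all must be aligned. Once the strengthening holds at $r=c$, the opening observation completes the induction on $k$ and yields the theorem.
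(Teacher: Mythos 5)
Your argument is correct, but note that the paper itself never proves Theorem~\ref{th:vdw}: it is quoted as a known result, with the proof delegated to van der Waerden's original paper~\cite{VDW} and the later proofs of Shelah~\cite{VDWs} and Gowers~\cite{Gowers}. What you have written out is precisely the classical combinatorial proof — double induction, outer on $k$ (assuming the result for $k-1$ and \emph{all} numbers of colors) and inner on the number $r\le c$ of color-focused monochromatic $(k-1)$-APs, with the block/super-coloring step that treats each block of length $W(k,c,r)$ as a single point colored by one of $c^{W(k,c,r)}$ colors. All the essential checks are in place: the new APs with differences $d_i+s$ and $s$ are monochromatic because the $k-1$ blocks are identically colored, they share the focus $f_1+(k-1)s$, and the case split on whether $f_1$'s color repeats one of the $c_i$ either closes the inner induction or produces a monochromatic $k$-AP outright; the bookkeeping items you flag (doubling the interval so the focus stays inside, finiteness of the recursion) are indeed routine. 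Two remarks on how this sits relative to the paper. First, your recursion bounds $W(k,c)$ by a tower of iterated exponentials of unbounded height — this is exactly the ``EEEENORMOUS,'' non-primitive-recursive growth the paper attributes to the original proof, so your approach reproduces van der Waerden's argument (essentially as presented in~\cite{GRS}) rather than the quantitatively stronger proofs of Shelah or Gowers. Second, since the paper's focus is lower bounds and constructivity, it is worth observing that your proof is finitistic and effective in the weak sense of giving an explicit (if astronomically large) $W$, but it plays no role in the constructive lower-bound machinery developed later in the paper.
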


\begin{definition}
Let $k,c,n\in \nat$ and let $COL:[n]\into [c]$. We say that {\it $COL$ is a $(k,c)$-proper coloring of $[n]$} if there is no monochromatic $k$-AP in $[n]$. We denote with $W(k,c)$ the least $W$ such that van der Waerden's theorem holds with these values of $k,c$ and $W$, i.e., the least $W$ such that there exists no proper coloring of $[W]$.
\end{definition}

The first proof of Theorem \ref{th:vdw} was due to van der Waerden~\cite{VDW}. 
The bounds on $W(k,c)$ were (to quote Graham, Rothchild, and Spencer~\cite{GRS})
EEEENORMOUS. Formally they were not primitive recursive. 
The proof is purely combinatorial. Shelah~\cite{VDWs} 
gave primitive recursive bounds with a purely combinatorial proof. The best bound is due to Gowers~\cite{Gowers} who used rather hard mathematics to obtain 

%$$
%{\Large  W(k,c) \le 2^{2^{c}{^{2^{2^{k+9}}}}}}.
%$$

$$
W(k,c) \le 2^{2^{c}{^{2^{2^{k+9}}}}}.
$$

In this paper we survey lower bounds for van der Waerden numbers. Some of the bounds are obtained by probabilistic proofs.
Since such proofs do not produce an actual coloring they are often called, informally, {\it nonconstructive}.
However, since all of the objects involved are finite, one could
(in principle) enumerate all of the colorings until one with the correct properties is found. We do not object to the term
{\it nonconstructive}; however, we wish to clarify it.
To this end we formally define two types of constructive proofs.
We only define these notions for proofs of lower bounds on $W(k,c)$.
It would be easy to define constructive proofs in general; however, we want to keep our presentation simple and focused.

\begin{definition}
A proof that $W(k,c)\ge f(k,c)$ is {\it deterministic-constructive} if it presents an algorithm
that will, for all $k,c$, produce a proper $c$-coloring of $[f(k,c)]$
in time polynomial in $f(k,c)$.
\end{definition}

Some of the nonconstructive techniques yield a
randomized algorithm that, with high probability, will
produce a proper coloring in polynomial time. These seem to us to be
different from truly nonconstructive techniques.
Hence we define a notion of randomized-constructive.

\begin{definition}
A proof that $W(k,c)\ge f(k,c)$ is {\it randomized-constructive} if it presents a randomized algorithm
that will, for all $k,c$, 
\begin{itemize}
\item
always produce either a proper $c$-coloring or the statement {\bf I HAVE FAILED!},
\item
with probability $\ge 2/3$ produce a proper $c$-coloring, and
\item
terminate in time polynomial in $f(k,c)$.
\end{itemize}
\end{definition}

\begin{note}~
\begin{enumerate}
\item
The success probability can be increased through standard amplification by repeating the algorithm (say) $f(k,c)$ times to make the probability of success $1 - \frac{1}{3^{f(k,c)}}$ or even higher. The required explicitly declared one-sided error makes it furthermore possible to transform each randomized-constructive proof into a Las Vegas algorithm that always outputs a proper $c$-coloring in expected polynomial time. 
\item
Similar probabilistic proofs of lower bounds for (off-diagonal) Ramsey Numbers~\cite{GRS,LLLnew} are neither deterministic-constructive nor randomized-constructive. The reason for this is that no polynomial time algorithm for detecting a failure (i.e., finding a large clique or independent set) is known. This makes randomized 
algorithms such as the ones by 
Haeupler, Saha, and Srinivasan~\cite{LLLnew} inherently Monte Carlo 
algorithms that cannot be made randomized-constructive.

\item
Work of Wigderson et al.~\cite{IW,NW} on derandomization shows that, under widely believed but elusive to prove hardness assumptions, randomness does not help algorithmically - or more formally that $\PP = \BPP$. In this case the above two notions of randomized-constructive and deterministic-constructive would coincide. 
\end{enumerate}
\end{note}

We present the following lower bounds:

\begin{enumerate}
\item
$W(k,2) \ge \sqrt{\frac{k}{3}} 2^{(k-1)/2}$ by a randomized-constructive proof.
This is an easy and known application of the probabilistic method of Erd{\"o}s and Rado~\cite{ErdosRado}.
This result is usually presented as being nonconstructive.

\item
$W(k,2)\ge  \sqrt k 2^{(k-1)/2}$ by a deterministic-constructive proof.
This is an easy derandomization of the Erd{\"o}s-Rado  lower bound using the method of conditional expectations
of Erd{\"o}s and Selfridge~\cite{condexp}. 
It is likely known though we have never seen it stated.

\item
If $p$ is prime then $W(p+1,2) \ge p(2^p-1)$ by a deterministic-constructive proof. 
Berlekamp~\cite{vdwprimes} proved this; however, our presentation follows that of 
Graham et al~\cite{GRS}. 
Berlekamp actually proved $W(p+1,2) \ge p2^p$. He also has lower bounds if $k$ is a prime power
and $c$ is any number.
Using a hard result from number theory~\cite{BHP} we obtain as a corollary that, for all but a finite number of $k$,
$W(k,2) \ge (k-k^{0.525})(2^{k-k^{0.525}}-1).$

\item
$W(k,2) \ge \frac{2^{(k-1)}}{4k}$ by a randomized-constructive proof. 
The nonconstructive version of this bound is implied by 
the Lov{\'a}sz Local Lemma~\cite{LLL} and by Szab{\'o}'s result~\cite{szabolower} (explained below).
The randomized-constructive proof is an application of 
Moser's~\cite{moserstoc} algorithmic proof of the Lov{\'a}sz Local Lemma.
Our presentation is based on Moser's STOC presentation~\cite{mosertalk} 
in which he sketched a Kolmogorov complexity based proof 
that differed significantly from the conference paper~\cite{moserstoc}. 
Later Moser and Tardos wrote a sequel making the general Lov{\'a}sz Local Lemma 
(with the optimal constants) constructive~\cite{mosertardos}. 
Schweitzer had, independently, used Kolmogorov complexity to obtain lower bounds on $W(k,c)$~\cite{vdwkolm}.

\item
For all $\eps>0$, for all $k\in\nat^+$, $W(k,2)\ge \frac{2^{(k-1)(1 - \eps)}}{ek}$ by a deterministic-constructive proof. More precisely we give a deterministic algorithm that, given $k$ and $\eps$, always outputs a proper coloring of $[\frac{2^{(k-1)(1 - \eps)}}{ek}]$ in time $2^{O(k/\eps)}$ which is polynomial in the output size for any constant $\eps >0$. This result is an application of a derandomization of the Moser-Tardos algorithm for the Lov{\'a}sz Local Lemma given by 
Chandrasekaran, Goyal and B.~Haeupler~\cite{LLLd}.
We present a simplified, short and completely self-contained proof.

\item The Lov{\'a}sz Local Lemma algorithm by Moser and Tardos~\cite{mosertardos} can be used to obtain $W(k,2)\ge \frac{2^{(k-1)}}{ek}$ by a randomized-constructive proof matching the best nonconstructive bound directly achievable via the Lov{\'a}sz Local Lemma (see \cite{GRS}). We show $W(k,2) \ge \frac{2^{(k-1)}}{ek} - 1$ as a simple corollary of our deterministic-constructive proof. 

%\item
%We give evidence, from the field of derandomization, that deterministic-constructive and randomized-constructive are the same. This only applies for van der Waerden lower bounds (and other similar theorems) but e.g. not for Ramsey's theorem.

\end{enumerate}

\begin{note}~
\begin{enumerate}
\item
The best known (asymptotic) lower bound on $W(k,2)$ is due to Szab{\'o}~\cite{szabolower}:
$$
\forall \epsilon>0, \ \forall \hbox{ large $k$}: \ W(k,2) \ge \frac{2^k}{k^\epsilon}.
$$
The proof is involved, relies on the Lov{\'a}sz Local Lemma and additionally exploits the structure of $k$-APs that almost all $k$-AP are almost disjoint (i.e., intersect in at most one number). 
While the original proof is nonconstructive it can be made constructive 
using the methods of some recent papers~\cite{LLLd,LLLnew,mosertardos}.

\item There is no analog of Szab{\'o}'s bound for $c\ge 3$ colors known. In contrast to this the techniques presented here directly extend to give lower bounds on multi-color van der Waerden numbers of the form $W(k,c) \geq \frac{c^{(k-1)}}{ek}$ for any integer $c\geq 2$. 
\item
The techniques used to prove the results mentioned in items 1,2,3,5, and 6 can be modified 
to get lower bounds for variants of van der Waerden numbers such as Gallai-Witt numbers (multi-dimensional van der Warden Numbers)~\cite{radogerman,radoenglish} (see also \cite{VDWbook,GRS}), and some polynomial van der Waerden numbers~\cite{pvdw,pvdww} (see also \cite{VDWbook}).
\end{enumerate}
\end{note}

We use the following easy lemmas throughout the paper.

\begin{lemma}\label{le:numof}
Let $k,n\in\nat^+$.
\begin{enumerate}
\item
Given a $k$-AP of $[n]$ the number of $k$-AP's that intersect it is less than $kn$.
\item
The number of $k$-AP's of $[n]$ is less than $n^2/k$.
\end{enumerate}
\end{lemma}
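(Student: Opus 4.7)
For part (2), the plan is straightforward counting: a $k$-AP of $[n]$ is uniquely determined by its first element $a \geq 1$ and common difference $d \geq 1$, subject to the single constraint $a + (k-1)d \leq n$. For each fixed $d$ there are $n - (k-1)d$ valid $a$, and $d$ ranges over $\{1, \ldots, \lfloor (n-1)/(k-1) \rfloor\}$. Evaluating the resulting arithmetic series gives a bound of the form $(n^2 - 1)/(2(k-1))$, and a short comparison then shows this is strictly less than $n^2/k$ for every $k \geq 2$.

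For part (1), the plan is a union bound over the elements of $T$: every $k$-AP meeting $T$ contains at least one point of $T$, so if $N_x$ denotes the number of $k$-APs of $[n]$ containing a fixed $x \in [n]$, then $|\{T' : T' \cap T \neq \emptyset\}| \leq \sum_{x \in T} N_x$, and it suffices to prove $N_x \leq n - 1$ for every $x$. To do this, I would parameterize each $k$-AP through $x$ by the position $i \in \{0, 1, \ldots, k-1\}$ of $x$ in the AP together with the common difference $d \geq 1$, and observe that fitting the AP inside $[n]$ imposes $id \leq x - 1$ and $(k-1-i)d \leq n - x$. For each of the $k - 2$ interior positions, adding these two inequalities gives $(k-1) d \leq n - 1$, so the maximum valid $d$ is at most $(n-1)/(k-1)$. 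For the two endpoint positions only one constraint is binding, giving the complementary bounds $D_0 \leq (n-x)/(k-1)$ and $D_{k-1} \leq (x-1)/(k-1)$, which sum to $(n-1)/(k-1)$. Summing over all $k$ positions therefore yields $N_x \leq (n-1)/(k-1) + (k-2)\cdot(n-1)/(k-1) = n - 1$, and finally $|\{T' : T' \cap T \neq \emptyset\}| \leq k(n-1) < kn$.

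The main subtlety is the endpoint observation: applying the naive per-position bound $D_i \leq (n-1)/(k-1)$ uniformly across all $k$ positions gives only $N_x \leq k(n-1)/(k-1)$, which for $k = 2$ equals $2(n-1)$ and, after summing over $T$, produces a bound already larger than $kn$. Noticing that the two endpoint constraints are complementary — one governed by $n-x$, the other by $x-1$ — is what recovers the missing factor and delivers the clean $< kn$ bound required for the applications later in the paper.
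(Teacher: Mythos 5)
Your proposal is correct, and for part (2) it is essentially the paper's argument: the paper sums over the starting point $a$ rather than the common difference $d$, obtaining the bound $\frac{n(n-1)}{2(k-1)} < \frac{n^2}{k}$, while your sum over $d$ gives a bound of the same order followed by the same comparison. For part (1) you follow the same skeleton as the paper -- bound the number $N_x$ of $k$-APs through a fixed point $x$ by $n-1$, then apply a union bound over the $k$ points of the given AP to get $k(n-1)<kn$ -- but you finish the per-point estimate by a different decomposition. The paper chooses, for each position $i$ of $x$ in the AP, just one of the two containment constraints according to whether $i$ lies in the first or second half (assuming $k$ even for simplicity), and then bounds the resulting harmonic-type sum via $\sum_{i=k/2+1}^{k}\frac{1}{i-1}\le 1$. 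You instead add the two constraints $id\le x-1$ and $(k-1-i)d\le n-x$ at each interior position to get the uniform bound $d\le\frac{n-1}{k-1}$, and pair the two complementary endpoint constraints, whose counts sum to $\frac{n-1}{k-1}$; this gives $N_x\le (k-1)\cdot\frac{n-1}{k-1}=n-1$ with no parity case distinction and no harmonic estimate. Both routes land on exactly the same bound; yours is a bit cleaner and more self-contained, and your closing observation correctly identifies why the naive uniform per-position bound would be too weak (e.g.\ for $k=2$), which is precisely the point at which the endpoint complementarity is needed.
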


\begin{proof}

\noindent
1.) We first bound how many $k$-AP's contain a fixed number $x \in [n]$.
Let $1 \le i\le k$. 
If $x$ is the $i^{th}$ element of some $k$-AP then in order for this
$k$-AP to be contained in $[n]$ its step width $d$ has to obey: $1 \ \le \ x - (i-1)d \ \ \ \mbox{ and } \ \ \ x+(k-i)d \ \le \ n.$

We assume for simplicity that $k$ is even (the odd case is nearly identical).
Once $i$ and $d$ are fixed, the $k$-AP is determined. We sum over
all possibilities of $i$ while assuming the second bound on $d$ for all $i \leq k/2$
and the first bound for $i > k/2$. This gives us the following
upper bound on the number of $k$-APs going through a fixed $x$:

$$\sum_{i=1}^{k/2} \frac{n-x}{k-i} \ + \; \sum_{i=k/2+1}^{k} \frac{x-1}{i-1} \ = \ (n-x)\sum_{i=1}^{k/2} \frac{1}{k-i} \ + \ (x-1)\sum_{i=k/2+1}^{k} \frac{1}{i-1} \ =$$

$$= \ (n-x+x-1)\sum_{i=k/2+1}^{k} \frac{1}{i-1} \ \leq \ n - 1.$$

Here the last inequality follows from $\sum_{i=k/2}^{k} \frac{1}{i-1}\le 1$ which can be easily shown by induction.
Using this upper bound we get that the number of $k$-AP's that intersect a given $k$-AP is at most $k(n-1) < kn$.

\bigskip

\noindent
2.) If a $k$-AP has starting point $a$ then then $a+(k-1)d \le n$, so $d \le \frac{n-a}{k-1}.$
Hence, for any $a\in [n]$, there are at most $\frac{n-a}{k-1}$ $k$-AP's that start with $a$.
The total number of $k$-AP's in $[n]$ is thus bounded by 

$$\sum_{a=1}^{n-1}\frac{n-a}{k-1}=\frac{1}{k-1}\sum_{a=1}^{n-1} n-a = \frac{n(n-1)}{2(k-1)} < \frac{n^2}{k}.$$
\end{proof}

\section{A Simple Randomized-Constructive Lower Bound}\label{sec:simplerconstr}

\begin{theorem}\label{thm:simplerconstructive}
$W(k,2)\ge \sqrt{\frac{k}{3}} 2^{(k-1)/2}$ by a randomized-constructive proof.
\end{theorem}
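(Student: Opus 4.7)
The plan is to use the classical Erd\H{o}s--Rado probabilistic argument, rendered algorithmic by the fact that one can efficiently check whether a given coloring is proper. The algorithm will simply pick a uniformly random $2$-coloring and then verify it; if verification fails, it outputs \textbf{I HAVE FAILED!}. Since the one-sided error is explicit and verification is polynomial time, this qualifies as randomized-constructive in the sense of the definition above.

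Concretely, I would first set $n=\lfloor \sqrt{k/3}\,2^{(k-1)/2}\rfloor$ and let the algorithm draw $COL:[n]\to\{1,2\}$ uniformly at random. Then it enumerates every $k$-AP of $[n]$ (there are fewer than $n^2/k$ of them by Lemma~\ref{le:numof}(2), so this takes polynomial time in $f(k,2)$), and for each one tests whether it is monochromatic. If none is monochromatic the algorithm outputs $COL$; otherwise it outputs the failure message. Correctness of a successful output is immediate from the verification step, so the only thing to analyze is the success probability.

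For the probability analysis, I would observe that a fixed $k$-AP is monochromatic under a uniformly random $2$-coloring with probability exactly $2\cdot 2^{-k}=2^{1-k}$, since once any one element is colored, the remaining $k-1$ elements must match it. By linearity of expectation combined with Lemma~\ref{le:numof}(2), the expected number of monochromatic $k$-APs is at most
$$\frac{n^2}{k}\cdot 2^{1-k}\;\le\;\frac{1}{k}\cdot\frac{k}{3}\cdot 2^{k-1}\cdot 2^{1-k}\;=\;\frac{1}{3}.$$
Markov's inequality (equivalently, the union bound) then gives that with probability at least $2/3$ there is no monochromatic $k$-AP, so the algorithm succeeds with probability $\ge 2/3$, as required.

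There is no real obstacle here; the whole content is (i) choosing $n$ precisely so the expected count is $\le 1/3$ rather than $\le 1$ (this is what distinguishes the randomized-constructive version from the classical nonconstructive bound of $\sqrt{k/2}\,2^{k/2}$), and (ii) noting that the monochromaticity check is polynomial-time, which is what makes the probabilistic argument algorithmically meaningful. The only thing that warrants a sentence of care is that the algorithm, upon failure, must announce its failure rather than return a bad coloring, so that it fits the randomized-constructive template.
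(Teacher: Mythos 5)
Your proposal is correct and is essentially identical to the paper's proof: a uniformly random coloring, a polynomial-time check of all $k$-APs with an explicit failure message, and a union bound showing the expected number of monochromatic $k$-APs is at most $1/3$ for $n=\sqrt{k/3}\,2^{(k-1)/2}$. No gaps; your calculation matches the paper's (and is stated a bit more cleanly).
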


\begin{proof}
We first present the classic nonconstructive proof and then show how to make it into a randomized-constructive proof.

Let $n = \sqrt{\frac{k}{3}} k 2^{(k-1)/2}$. 
Color each number $x$ from $1$ to $n$ by flipping a fair coin.
If the coin is heads then color $x$ with 0,  if the coin is tails then color $x$ with 1.
Let $p$ be the probability that there is a monochromatic $k$-AP. We will show that $p<1$ and hence there is some choice of coin flips that leads to a proper 2-coloring of $[n]$.

By Lemma \ref{le:numof} the number of $k$-AP's is bounded by $n^2/k$. Because of the random choice of colors each $k$-AP becomes monochromatic with probability exactly $2^{-(k-1)}$ and a simple union bound over all $k$-AP's gives:

$$p \le (n^2/k) 2^{-(k-1)} = \frac{n^2}{k2^{(k-1)}}.$$

Looking ahead to making this proof randomized-constructive we want this probability to be at most $1/3$. We show that this is implied by our choice of $n$.

$$\frac{n^2}{k2^{k-1}} \le 1/3$$

$$3n^2 \le k2^{k-1}$$

$$\sqrt 3 n \le \sqrt k 2^{(k-1)/2}$$

$$n \le \sqrt{\frac{k}{3}} 2^{(k-1)/2}.$$

We now present a randomized algorithm that produces (with high probability)
a proper coloring and admits its failure when it does not.

\begin{enumerate}
\item
Get input $k$ and let $n=\sqrt{\frac{k}{3}} 2^{(k-1)/2}$.
\item
Use $n$ random bits to color $[n]$.
\item
Check all $k$-APs of $[n]$ to see if any are monochromatic. (by Lemma~\ref{le:numof} there are at most $n^2/k$ different $k$-APs to check, 
so this takes $O(n^2)$ time). If none are monochromatic then the 
coloring is proper and we output it. Else output {\bf I HAVE FAILED!}.
\end{enumerate}

By the above calculations the probability of success is $\ge 2/3$.
By comments made in the algorithm it runs in polynomial time.
\end{proof}

\section{A Simple Deterministic-Constructive Proof}

\begin{theorem}
$W(k,2)\ge \sqrt k 2^{(k-1)/2}$ by a deterministic-constructive proof.
\end{theorem}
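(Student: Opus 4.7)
The plan is to derandomize the proof of Theorem~\ref{thm:simplerconstructive} via the classical method of conditional expectations (Erd\H{o}s--Selfridge). Set $n = \sqrt{k}\,2^{(k-1)/2}$, so that $n^2 = k \cdot 2^{k-1}$. By Lemma~\ref{le:numof}(2) the number of $k$-APs in $[n]$ is \emph{strictly} less than $n^2/k$, while each fixed $k$-AP is monochromatic under a uniformly random 2-coloring with probability exactly $2^{-(k-1)}$. Hence, letting $X$ count the monochromatic $k$-APs,
$$E[X] \;<\; \frac{n^2}{k}\cdot 2^{-(k-1)} \;=\; 1.$$

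Now I would color the integers $1,2,\ldots,n$ one at a time, maintaining the invariant that $E[X \mid \text{coloring so far}] < 1$, where the unrevealed colors are treated as independent fair coins. At step $i$, given the colors already assigned to $1,\ldots,i-1$, compute the two conditional expectations obtained by setting $COL(i)=0$ and $COL(i)=1$; since their average equals $E[X\mid \text{coloring of }1,\ldots,i-1]$, at least one of the two choices does not increase the conditional expectation. Assign $COL(i)$ to be that color. After all $n$ steps the coloring is complete, the conditional expectation equals the actual value of $X$, and the invariant forces $X<1$, hence $X=0$: the coloring is proper.

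What makes this deterministic-constructive is that the conditional expectation decomposes as a sum over $k$-APs. For each $k$-AP $P$, given the partial coloring, either (i) $P$ already contains both colors, contributing $0$; or (ii) $P$ has $j\ge 1$ already-colored positions all of the same color, contributing $2^{-(k-j)}$; or (iii) $j=0$, contributing $2^{-(k-1)}$. So $E[X\mid\text{partial}]$ is computed exactly in time $O(k)$ per $k$-AP, i.e.\ in $O(n^2)$ time per step by Lemma~\ref{le:numof}(2), for a total running time of $O(n^3)$, which is polynomial in $n=f(k,2)$.

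There is no real obstacle; the only point that needs care is the strict inequality $E[X]<1$, which is why Lemma~\ref{le:numof}(2) is stated with a strict bound on the number of $k$-APs rather than an equality. This is what buys us the improvement from $\sqrt{k/3}$ in Theorem~\ref{thm:simplerconstructive} (where we needed $E[X]\le 1/3$ to guarantee success probability $\ge 2/3$) to $\sqrt{k}$ here: the derandomization demands only $E[X]<1$, not a constant success probability.
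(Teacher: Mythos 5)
Your proposal is correct and follows essentially the same route as the paper: derandomize the simple union-bound argument by the Erd\H{o}s--Selfridge method of conditional expectations, maintaining the invariant that the expected number of monochromatic $k$-APs (with unrevealed colors treated as fair coins) stays below $1$, and noting that this expectation is exactly computable in polynomial time. The only cosmetic difference is that the paper takes $n$ strictly less than $\sqrt{k}\,2^{(k-1)/2}$ while you take $n$ equal to it and lean on the strict inequality in Lemma~\ref{le:numof}(2); both yield the stated bound.
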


\begin{proof}
We derandomize the algorithm from Section \ref{sec:simplerconstr} using the method of conditional probabilities~\cite{LLL}. Let $n< {\sqrt k 2^{(k-1)/2}}$ and $X$ be the set of all arithmetic progressions of length $k$ that are contained in $[n]$.

Let $f:\R^n \into \R$ be defined by

$$f(x_1,\ldots,x_n) = \sum_{s\in X} ( \prod_{i\in s} x_i + \prod_{i\in s} (1-x_i) ).$$

We will color $[n]$ with 0's and 1's. Assume we have such a coloring and that $x_i$ is the color of $i$. When $x_i$ is set to $1/2$ that means that we have not colored it yet. Note that $f(x_1,\ldots,x_n)$ gives exactly the expected number of monochromatic $k$-AP's when each number $i$ gets colored independently with probability $P(i \textrm{ is colored }1) = x_i$. Thus a coloring has a monochromatic $k$-AP iff $f(x_1,\ldots,x_n) \ge 1$. We will color $[n]$ such that $f(x_1,\ldots,x_n) < 1$.

Note that 
\[
\begin{array}{rl}
f(1/2,\ldots,1/2)= \sum_{s\in X} ( \prod_{i\in s} 1/2 + \prod_{i\in s} 1/2) & =  \sum_{s\in X} ((1/2)^k  + (1/2)^k)\cr 
                                                                            & = \sum_{s\in X} (1/2)^{k-1}\cr
                                                                            &\le n^2/(k 2^{k-1})\cr
\end{array}
\]

We need this to be $<1$. We set this $<1$ which will derive what $n$ has to be.

\[
\begin{array}{rl}
n^2/(k 2^{k-1}) & < 1 \cr
n^2< & k 2^{k-1}\cr
n< & \sqrt{k}2^{(k-1)/2}\cr
\end{array}
\]

We now present a deterministic algorithm:

\begin{enumerate}
\item
Let $x_1=x_2=\cdots=x_n=1/2$. By Lemma~\ref{le:numof} the number of $k$-AP's is $\le n^2/k$.
By the above calculation $f(x_1,\ldots,x_n)<1$.
\item
For $i=1$ to $n$ do the following.
When we color $i$ we already have $1,2,\ldots,i-1$ colored.
Let the colors be $c_1,\ldots,c_{i-1}$. Hence our function now looks like,
leaving the color of $i$ a variable,
$f(c_1,\ldots,c_{i-1},z,1/2,\ldots,1/2)$.
This is a linear function of $z$. We know inductively that if $z=1/2$ then the value is
$<1$. If the coefficient of $z$ is positive then color $i$ 0.
If the       coefficient of $z$ is negative then color $i$ 1.
In either case this will ensure that
$$f(c_1,\ldots,c_i,1/2,\ldots,1/2) \le f(c_1,\ldots,c_{i-1},1/2,\ldots,1/2) < 1.$$
\end{enumerate}

At the end we have $f(x_1,\ldots,x_n)<1$ and hence we have a proper 2-coloring.
It is easy to see that this algorithms runs in time polynomial in $n$.
\end{proof}

\section{An Algebraic Lower Bound}

We will need the following facts.

\begin{fact}\label{fa:known}~
Let $p\in\nat$ (not necessarily a prime).
\begin{enumerate}
\item\label{fa:knownvec}
There is a unique (up to isomorphism) finite field
of size $2^p$. We denote this field by $F_{2^p}$. 
$F_{2^p}$ can be represented by $F_2[x]/<i(x)>$ where $i$ is an irreducible polynomial
of degree $p$ in $F_2[x]$.
$F_{2^p}$ can be viewed as a vector space of dimension $p$ over $F_2$.
The basis of this vectors space is (the equivalence classes of) $1,x,x^2,\ldots,x^{p-1}$.
\item\label{fa:knowngen}
The group $F_{2^p}-\{0\}$ under multiplication is isomorphic to the cyclic
group on $2^p-1$ elements. Hence it has a generator $g$ such that
$$F_{2^p}-\{0\} = \{g,g^2,g^3,\ldots,g^{2^p-1}\}.$$
This generator can be found in time polynomial in $2^p$.
\item \label{fa:knownkey}
Assume $p$ is prime.
Let $g$ be a generator of $F_{2^p}$, and $\beta=g^d$ where $1\le d< 2^p-1$.
We do all arithmetic in $F_{2^p}$.
Let $P$ be a nonzero polynomial of degree $\le p-1$, with coefficients in $\{0,1,2,\ldots,2^p-1\}$.
Then $P(g)\ne 0$ and $P(\beta)\ne 0$.
\end{enumerate}
\end{fact}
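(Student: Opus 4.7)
The plan is to dispose of the three parts in sequence, relying on classical finite-field theory and isolating the primality of $p$ as the essential ingredient for Part~\ref{fa:knownkey}.

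For Part~\ref{fa:knownvec}, I would construct $F_{2^p}$ as the splitting field of $x^{2^p}-x$ over $F_2$; existence and uniqueness up to isomorphism are classical. The quotient presentation $F_2[x]/\langle i(x)\rangle$ needs an irreducible polynomial $i$ of degree $p$ in $F_2[x]$, which I would produce by a cardinality argument: there are $2^p$ monic degree-$p$ polynomials over $F_2$, but the count of those factoring nontrivially (summed over factorization types into strictly smaller-degree pieces) is strictly smaller. The basis $1,x,\ldots,x^{p-1}$ then drops out of the division algorithm in $F_2[x]$. For Part~\ref{fa:knowngen}, cyclicity of $F_{2^p}\setminus\{0\}$ follows from the standard observation that a finite abelian group decomposes into cyclic factors whose orders divide some maximum $d$, so every element satisfies $x^d=1$; since a polynomial over a field has at most as many roots as its degree, the group has size $\le d$ and hence is itself cyclic. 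To find a generator algorithmically I would factor $2^p-1$ by trial division (cost polynomial in $2^p$) and then test each $h\in F_{2^p}\setminus\{0\}$ by checking $h^{(2^p-1)/q}\ne 1$ for every prime $q\mid 2^p-1$; a generator is found in at most $2^p-1$ trials.

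Part~\ref{fa:knownkey} is the heart of the statement and the only place primality of $p$ matters. The plan is to show that both $g$ and $\beta$ have minimal polynomial of degree exactly $p$ over $F_2$, which immediately gives $P(g),P(\beta)\ne 0$ for any nonzero $P$ of degree $\le p-1$, once one observes that in characteristic $2$ the coefficients drawn from $\{0,1,\ldots,2^p-1\}$ reduce modulo $2$ to $F_2$-coefficients. For $\alpha\in\{g,\beta\}$, the subring $F_2[\alpha]$ is a subfield of $F_{2^p}$, hence equals $F_{2^e}$ for some $e\mid p$; primality of $p$ then forces $e=1$ or $e=p$. The case $e=1$ is ruled out in each instance: $g$ has multiplicative order $2^p-1\ge 3$ so $g\notin\{0,1\}$, and $\beta=g^d$ with $1\le d<2^p-1$ satisfies $\beta\ne 0$ and $\beta\ne 1$ (the latter would require $(2^p-1)\mid d$). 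Thus $e=p$, the minimal polynomial of $\alpha$ over $F_2$ has degree $p$, and the non-vanishing claim follows.

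The main obstacle is cosmetic rather than mathematical: the coefficient set $\{0,1,\ldots,2^p-1\}$ is slightly ambiguous as written, so the proof must commit to the reading under which a nonzero polynomial remains nonzero after mod-$2$ reduction of its coefficients. With that convention the subfield argument above is airtight, and the role of $p$ being prime is precisely to eliminate proper intermediate subfields that would otherwise let $g$ or $\beta$ satisfy a lower-degree relation.
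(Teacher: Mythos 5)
Your proposal is correct and, for Part~3 --- the only part the paper actually proves --- it follows essentially the same route as the paper: primality of $p$ rules out proper intermediate subfields, so any $\alpha \in F_{2^p}\setminus F_2$ (in particular $g$ and $\beta=g^d$ with $1\le d<2^p-1$) has minimal polynomial of degree exactly $p$ over $F_2$, and hence no nonzero polynomial of degree $\le p-1$ over $F_2$ can vanish at it. Your sketches of Parts~1 and~2 (which the paper omits as well known) and your mod-$2$ reading of the coefficient set $\{0,1,\ldots,2^p-1\}$ are standard and consistent with how the fact is actually used later in the paper (where all coefficients are in $\{0,1\}$).
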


\begin{proof} The first two facts are well known and hence we omit the proof. To see the third fact note that $F_{2^p}$ can be viewed as a vector space of dimension $p$ over $F_2$. There can be no field strictly between $F_2$ and $F_{2^p}$: if there was then its dimension as a vector space over $F_2$ would be a proper divisor of $p$. For any $a \in F_{2^p} - F_2$ we get now that $F_2(a)$ is $F_{2^p}$ because it would otherwise be a field strictly between $F_2$ and $F_{2^p}$.
Hence the minimal polynomial of $a$ in $F_2[X]$, which we denote $Q$, has degree $p$. Let $P$ be a nonzero polynomial in $F_2[X]$ of degree at most $p-1$.
If $P(a)=0$ then $P$ has to be a multiple of $Q$. Since $P$ has degree $\le p-1$ and $Q$ has degree $p$, this is impossible. 
Hence $P(a)\ne 0$. This applies to $a=g$ and to $a=g^d$ with $1\le d \le 2^p - 2$. (Note that $d=2^p -1$ gives $\beta = 1$.)
\end{proof}

\begin{theorem}\label{th:primes}
If $p$ is prime then $W(p+1,2)\ge p(2^p-1)$ by a deterministic-constructive proof.
\end{theorem}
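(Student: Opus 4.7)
The plan is to mimic Berlekamp's algebraic construction: pick a generator $g$ of $F_{2^p}^*$ (available in polynomial time by Fact~\ref{fa:knowngen}) and a non-zero $F_2$-linear functional $\phi\colon F_{2^p}\to F_2$, for example the projection onto the coefficient of $1$ in the basis $\{1,x,\ldots,x^{p-1}\}$, and set $\chi(n)=\phi(g^{\,n\bmod(2^p-1)})$ for $n\in[p(2^p-1)-1]$. Each colour is computable in time polynomial in $p(2^p-1)$ by repeated squaring of $g$, so the construction meets the deterministic-constructive requirements.

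To rule out monochromatic $(p+1)$-APs, I would suppose $A=\{a+id:i=0,\ldots,p\}$ is one with common colour $v\in F_2$. Since $a+pd\le p(2^p-1)-1$ one has $d\le 2^p-2$, so $\beta:=g^d$ satisfies $1\le d<2^p-1$ and Fact~\ref{fa:knownkey} applies: no nonzero $F_2$-polynomial of degree $\le p-1$ vanishes at $\beta$. In particular $1,\beta,\ldots,\beta^{p-1}$ are $F_2$-linearly independent and hence form a basis of $F_{2^p}$. The $p+1$ elements $g^{a+id}=g^a\beta^i$ must then satisfy a non-trivial $F_2$-linear dependence $\sum_{i=0}^p c_i g^{a+id}=0$; cancelling $g^a$ makes $P(X):=\sum c_iX^i$ vanish at $\beta$, so Fact~\ref{fa:knownkey} forces $\deg P=p$ (hence $c_p=1$), and applying the same fact to the cofactor $P(X)/(X+1)$ (valid because $\beta\ne 1$) forces $P(1)=\sum c_i=1$. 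If $v=1$, applying $\phi$ to the dependence gives $0=\phi(0)=\sum_i c_i\phi(g^{a+id})=\sum_i c_i=1$, a contradiction. If $v=0$, the non-zero $F_2$-linear functional $\tilde\phi(y):=\phi(g^ay)$ satisfies $\tilde\phi(\beta^i)=\phi(g^{a+id})=0$ for $i=0,\ldots,p-1$, so it vanishes on the basis $\{1,\beta,\ldots,\beta^{p-1}\}$, forcing $\tilde\phi\equiv 0$; this contradicts $\tilde\phi\ne 0$.

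The main obstacle I anticipate is conceptual rather than technical: one is easily tempted to try to exploit the factor $p$ in the bound $p(2^p-1)$ by interleaving $p$ different colourings in $p$ strips. In fact the single period-$(2^p-1)$ colouring above suffices on the full range $[p(2^p-1)-1]$, because the only thing the proof needs is $d\le 2^p-2$, which keeps $\beta=g^d$ outside $F_2$; and this in turn is exactly the constraint the length of the AP puts on $d$. Once this alignment is spotted, Fact~\ref{fa:knownkey} does essentially all the algebraic work, cleanly separating the $v=1$ contradiction (a summation) from the $v=0$ contradiction (a basis argument).
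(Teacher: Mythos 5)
Your proposal is correct and is essentially the paper's own argument: the same Berlekamp coloring $j\mapsto$ (a fixed coordinate of $g^j$ in an $F_2$-basis of $F_{2^p}$), with Fact~\ref{fa:knownkey} doing the algebraic work and the same polynomial-time construction via Facts~\ref{fa:knownvec} and~\ref{fa:knowngen}. The only difference is cosmetic: you extract one dependence among all $p+1$ elements and pin down $\deg P=p$ and $P(1)=1$ before splitting on the common color, whereas the paper splits on the color first and argues dependence inside the $(p-1)$-dimensional kernel hyperplane (subtracting $\alpha$ in the color-$1$ case); both endgames are equivalent.
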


\begin{proof}
Let $F=F_{2^p}$, the field on $2^p$ elements.
By Fact~\ref{fa:knownvec} $F$ is a vector space of dimension $p$
over $F_2$. Let $v_1,\ldots,v_p$ be a basis.
By Fact~\ref{fa:knowngen} there exists a generator $g$ such that
$$F-\{0\} = \{g,g^2,g^3,\ldots,g^{2^p-1}\}.$$

We express $g, g^2, \ldots, g^{2^p-1}, g^{2^p},\ldots,g^{p(2^p-1)}$ in terms of the basis.
This looks odd since $g=g^{2^p}$ so this list repeats itself; however, it will be useful.

For $1\le j\le p(2^p-1)$ and for $1\le i\le p$ let $a_{ij}\in \{0,1\}$ be such that

$$g^j = \sum_{i=1}^p a_{ij} v_i.$$

We now color $[p(2^p-1)]$. Let $j\in [p(2^p-1)]$. Color $j$ with $a_{1j}$.
That is, express $g^j$ in the basis $\{v_1,\ldots,v_p\}$ and color it with
the coefficient of $v_1$, which will be a 0 or 1.
We need to show that this is indeed a proper coloring.
Assume, by way of contradiction that the coloring is not proper.
Hence there is a monochromatic $(p+1)$-AP.
We denote it

$$a, a+d, \ldots, a+pd.$$

Since all of the numbers are in $[p(2^p-1)]$ we have $a+pd \le p(2^p-1)$ and thus $d \le 2^p-2$.
Therefore we get $g^d \ne 1$.

If we express any of

$$I=\{g^a, g^{a+d}, \ldots, g^{a+pd}\} =  \{ g^a, g^ag^d, g^ag^{2d}, \ldots, g^ag^{pd} \}$$

in terms of the basis they have the same coefficient for $v_1$.
Let $\alpha=g^a$ and $\beta=g^d\ne 1$.
Recall that, by Fact~\ref{fa:knownkey}, $\beta$ does not solve any degree $p-1$ 
polynomial with coefficients in $\{0,1\}$.

\medskip 

\noindent
{\bf Case 1:} The coefficient is 0. Then we have that all of the elements of $I$
lie in the $p-1$ dim space spanned by $\{v_2,\ldots,v_p\}$.
There are $p+1$ elements of $I$, so any $p$ of them are linearly dependent.
Hence $I'=\{ \alpha, \alpha\beta, \alpha\beta^2, \ldots, \alpha\beta^{p-1} \}$
is linearly dependent. So there exists $b_0,\ldots,b_{p-1} \in \bit$, not all 0,  such that 

$$\sum_{i=0}^{p-1} b_i \alpha \beta^{i} = 0$$

$$\sum_{i=0}^{p-1} b_i \beta^{i} = 0.$$

Therefore $\beta$ satisfies a polynomial of degree $\le p-1$ with coefficients in
$\{0,1\}$, contradicting Fact~\ref{fa:knownkey}.

\medskip 

\noindent
{\bf Case 2:} The coefficient is 1. 
Hence all of the elements of $I$, when expressed in the basis $\{v_1,\ldots,v_p\}$
have coefficient 1 for $v_1$. Take all of the elements of $I$ (except $\alpha$)
and subtract $\alpha$ from them. The set we obtain is

$$\{\alpha\beta-\alpha , \alpha\beta^2-\alpha, \ldots, \alpha\beta^p-\alpha \}=
  \{\alpha(\beta-1) , \alpha(\beta^2-1), \ldots, \alpha(\beta^p-1) \}.$$

\noindent
{\bf KEY:} All of these elements, when expressed in the basis, have coefficient 0 for $v_1$.
Hence we have $p$ elements in a $p-1$-dim vectors space.
Therefore they are linearly dependent.
So there exists $b_0,\ldots,b_{p-1} \in \bit$, not all 0, such that 

$$\sum_{i=0}^{p-1} b_i\alpha(\beta^i-1)= 0$$

$$\sum_{i=0}^{p-1} b_i(\beta^i-1)= 0.$$

Therefore $\beta$ satisfies a polynomial of degree $\le p-1$ over $F_2$. This contradicts Fact~\ref{fa:knownkey}.

We now express the above proof in terms of a deterministic construction.

\begin{enumerate}
\item
Input($p+1$).
\item
Find an irreducible polynomial $i(x)$ of degree $p$ over $F_2[x]$.
This gives a representation of $F_{2^p}$, namely \hbox{$F_2[x]/<i(x)>$.}
Note that $1,x,x^2,\ldots,x^{p-1}$ is a basis for $F_{2^p}$ over $F_2$.
Let $v_i=x^{i+1}$.
\item
Find $g$, a generator for $F_{2^p}$ viewed as a cyclic group.
\item
Express $g$, $g^2$, $\ldots$, $g^{p(2^p-1)}$ in terms of the basis.
For $1\le j\le p(2^p-1)$, for $1\le i\le p$ let $a_{ij}\in \{0,1\}$ be such that
$g^j = \sum_{i=1}^p a_{ij} v_i.$
\item
Let $j\in [p(2^p-1)]$. Color $j$ with $a_{1j}$.
\end{enumerate}

Steps 2 and 3 can be done in time polynomial in $2^p$ by Fact~\ref{fa:known}.
Step 4 can be done in time polynomial in $2^p$ using simple linear algebra.
Hence the entire algorithm takes time polynomial in $2^p$.
\end{proof}

\smallskip

Baker, Harman, and Pintz~\cite{BHP} (see~\cite{ConPrimesSurvey} for
a survey) showed that, for all but a finite number of $k$, there is a prime between
$k$ and $k-k^{0.525}$. Hence we have the following corollary.

\begin{corollary}
For all but a finite number of $k$, 
$$W(k,2) \ge (k-k^{0.525})(2^{k-k^{0.525}}-1).$$
(We do not claim this proof is deterministic-constructive or randomized-constructive.)
\end{corollary}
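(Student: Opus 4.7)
The approach is to combine Theorem~\ref{th:primes} with the Baker--Harman--Pintz prime gap result. The first ingredient is the monotonicity $W(k,2)\ge W(k',2)$ for all $k'\le k$, which holds because any monochromatic $k$-AP contains a monochromatic $k'$-AP, so a proper $(k',2)$-coloring is automatically a proper $(k,2)$-coloring. Consequently, for any prime $p$ with $p+1\le k$, Theorem~\ref{th:primes} yields $W(k,2)\ge W(p+1,2)\ge p(2^p-1)$.

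The second step is to produce such a prime $p$ near $k$. I would apply BHP with parameter $k-1$: for all but finitely many $k$, the interval $[(k-1)-(k-1)^{0.525},\,k-1]$ contains a prime $p$. Then $p\le k-1$ and, for all sufficiently large $k$, $p\ge k-k^{0.525}$; the tiny additive discrepancy between $(k-1)-(k-1)^{0.525}$ and $k-k^{0.525}$ is absorbed either by invoking any slightly larger admissible exponent in BHP (the exponent $0.525$ is not tight) or by noting that the ``all but finitely many $k$'' clause can swallow any fixed additive constant once the dominant $2^{k-k^{0.525}}$ factor is in place. Since the map $x\mapsto x(2^x-1)$ is increasing, substituting $p\ge k-k^{0.525}$ into the bound of the previous paragraph yields the claimed $W(k,2)\ge (k-k^{0.525})(2^{k-k^{0.525}}-1)$.

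I do not anticipate any genuine mathematical obstacle here --- the argument is essentially short bookkeeping on top of Theorem~\ref{th:primes} and BHP --- but it is worth flagging the reason that the parenthetical disclaimer about constructivity is unavoidable. The Baker--Harman--Pintz theorem is a purely analytic existence statement; no algorithm is known that, given $k$, locates a prime in an interval as short as $[k-k^{0.525},\,k]$ in time polynomial in $\log k$ (or even $k$), either deterministically or via randomization. Hence, although the coloring produced by Theorem~\ref{th:primes} is itself explicit and efficient once the prime $p$ is in hand, the prime-finding prerequisite cannot currently be discharged efficiently, which is precisely why the corollary declines to call itself deterministic- or randomized-constructive.
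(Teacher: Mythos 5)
Your proposal is correct and is essentially the paper's own argument: monotonicity of $W(\cdot,2)$ in $k$, Theorem~\ref{th:primes}, and the Baker--Harman--Pintz prime-gap theorem. The only difference is bookkeeping: the paper applies BHP at $k$ itself, takes a prime $p$ with $k-k^{0.525}\le p\le k$, and writes $W(k,2)\ge W(p+1,2)\ge p(2^p-1)\ge (k-k^{0.525})(2^{k-k^{0.525}}-1)$ (silently treating $p\le k-1$), so the $(k-1)$-versus-$k$ discrepancy you patch over---and your somewhat shaky ``absorb an additive constant'' remark, which would not by itself repair an exponent-level deficit---does not arise in the paper's write-up.
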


\begin{proof}
Given $k$ let $p$ be the primes such that 
$k-k^{0.525}\le p \le k$.
By Theorem~\ref{th:primes} $W(p+1,2)\ge p(2^p-1)$
Hence
$$W(k,2) \ge W(p+1,2) \ge p(2^p-1) \ge (k-k^{0.525})(2^{k-k^{0.525}}-1).$$
\end{proof}

\section{A Bit of Kolmogorov Theory}\label{se:kolm}

We will need some Kolmogorov theory for the next section and thus give a short introduction here.
For a fuller and more rigorous account of Kolmogorov Theory see the book by Li and Vitanyi~\cite{LV}.

What makes a string random? Consider the string $x=0^n$. This string does not seem that random but how can we pin that down?
Note that $x$ is of length $n$ but can be easily produced by a program of length $\lg(n)+O(1)$ like this:

{
\obeylines{
	\qquad FOR $x=1$ to $n$, PRINT(0)
}
}

\bigskip

\noindent By contrast consider the following string

$$x=0110100101010010101011111100001110010101$$

\noindent
which we obtained by flipping a coin 40 times.
It can be produced by the following program.

{
\obeylines{
	\qquad PRINT(0110100101010010101011111100001110010101)
}
}

Note that this program is of length roughly $|x|$. 
There does not seem to be a shorter program to produce $x$.
The string $x$ seems random in that there is no pattern in $x$
which would lead to a shorter program to print $x$ than the one above. 
Informally a string $x$ looks random, if the shortest program to print out $x$ has length roughly $|x|$.
We formalize this.

\begin{definition}
Fix a programming language $L$ that is Turing complete.
Let $x\in \bits n$ (think of $n$ as large)
and $y\in \bits m$ (think of $m$ as small).
$K_L(x|y)$ is the 
length of the shortest
program $P$ in $L$ such that $P(y)$ has output $x$.
\end{definition}

\begin{fact}
If $L_1$ and $L_2$ are Turing complete programming languages 
then there is a program that translates one to the other.
This program is of constant size. Hence there is a constant $\alpha\in\nat$ such that
$|K_{L_1}(x|y) - K_{L_2}(x|y)| \le \alpha$.
Therefore $K_L(x|y)$  is independent of $L$ up to an additive constant factor.
Hence we will drop the $L$ and always include an $O(1)$ or $\Omega(1)$
term as is appropriate.
\end{fact}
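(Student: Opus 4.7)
The plan is to exploit Turing completeness symmetrically: any program in one language can be simulated by a fixed ``interpreter'' program in the other, at an additive overhead that is independent of the program being simulated.

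First I would invoke Turing completeness to obtain, in $L_2$, an interpreter $I_{1\to 2}$: a fixed $L_2$-program that, given (a suitable encoding of) an $L_1$-program $P$ together with an input $y$, outputs whatever $P(y)$ outputs. Symmetrically there is an $L_1$-program $I_{2\to 1}$. Let $c_{1\to 2}$ and $c_{2\to 1}$ denote their lengths; crucially, these depend only on the pair of languages, not on $x$, $y$, or any particular program $P$.

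Next, let $P^*$ be a shortest $L_1$-program witnessing $K_{L_1}(x|y)$, so $|P^*| = K_{L_1}(x|y)$. I would build an $L_2$-program that hard-codes the source of $P^*$, reads $y$ from its input, and passes both to $I_{1\to 2}$. This composite has length at most $K_{L_1}(x|y) + c_{1\to 2} + O(1)$, where the $O(1)$ absorbs fixed glue code. Hence $K_{L_2}(x|y) \le K_{L_1}(x|y) + \alpha_{1\to 2}$ for a constant $\alpha_{1\to 2}$, and the symmetric construction gives the reverse inequality; taking $\alpha = \max(\alpha_{1\to 2}, \alpha_{2\to 1})$ completes the argument.

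The one mildly delicate step is verifying that the ``glue code'' attaching the source of $P^*$ to the interpreter is truly of constant size, independent of $|P^*|$. This is handled by using a self-delimiting encoding of $P^*$, or equivalently by feeding the source through a designated input tape, both of which are routine given that $L_1$ and $L_2$ are Turing complete. Everything else is honest boilerplate.
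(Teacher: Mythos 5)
Your argument is correct and is essentially the paper's own justification spelled out in more detail: the paper simply asserts a constant-size translation program between the two Turing-complete languages and concludes the additive constant, while you make the same simulation overhead explicit via interpreters, hard-coding the shortest program, and taking the maximum of the two constants. Nothing further is needed.
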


\begin{definition} \label{def:kolrandom} 
A string $x$ is {\it Kolmogorov random relative to $y$} if $K(x|y) \ge |x|+\Omega(1)$.
\end{definition}

\begin{fact}
By comparing the number of strings of length $n$ to the
number of descriptions of length smaller than $n$ we conclude 
that most strings are Kolmogorov random.
Hence if you find that a randomized algorithm works
well when you use a Kolmogorov random string for
the random bits, then it works well for most strings.
We will assume that at least $2/3$ of all strings of length
$n$ are Kolmogorov random; however, there are really far more.
\end{fact}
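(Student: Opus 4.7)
The plan is to verify this Fact via a direct counting (pigeonhole) argument that compares the number of strings of length $n$ against the number of possible short descriptions. First I would pin down the ``$\Omega(1)$'' slack in Definition~\ref{def:kolrandom} by fixing a concrete constant $c$: call a string $x\in\bits{n}$ non-random relative to $y$ exactly when there is a program $P$ in the fixed language $L$ with $|P|<n-c$ and $P(y)=x$. The choice $c=2$ will suffice for the claimed $2/3$ bound, and any larger $c$ will give the stronger ``really far more'' statement mentioned at the end of the Fact.

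Next I would upper bound the number of non-random length-$n$ strings. The number of binary program strings of length strictly less than $n-c$ is $\sum_{i=0}^{n-c-1} 2^i = 2^{n-c}-1$. Each such program, when run on the auxiliary input $y$ in the deterministic language $L$, produces at most one output (if it halts at all), so at most $2^{n-c}-1$ distinct strings of length $n$ arise as outputs of programs of length $<n-c$. Hence at most $2^{n-c}-1$ strings of length $n$ are non-random.

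Dividing by the total count $2^n$, the fraction of non-random length-$n$ strings is at most $(2^{n-c}-1)/2^n < 2^{-c}$. For $c=2$ this fraction is less than $1/4 < 1/3$, so at least $2/3$ of length-$n$ strings are Kolmogorov random, which is exactly the stated bound. Taking $c$ to be any larger constant drives the fraction of Kolmogorov random strings arbitrarily close to $1$, matching the parenthetical ``really far more'' remark; the second sentence of the Fact (about randomized algorithms succeeding on Kolmogorov random strings also succeeding on most strings) then follows immediately, since a property shared by all Kolmogorov random strings is shared by at least a $2/3$ fraction of all length-$n$ strings.

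The only place that needs a moment of care---and really the only obstacle---is the translation between the language-independent ``$O(1)$/$\Omega(1)$'' notation used in the definition and the explicit numerical constant $c$ required to pin down a $2/3$ threshold; once a fixed programming language and a fixed $c$ are chosen, the counting step itself is elementary and does not depend on any deep property of Kolmogorov complexity beyond the uniqueness of program outputs.
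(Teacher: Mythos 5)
Your counting argument is correct and is exactly the comparison the paper has in mind: the Fact is stated without a separate proof, and its first sentence already encodes the same pigeonhole reasoning (fewer than $2^{n-c}$ programs of length below $n-c$, each yielding at most one output, versus $2^n$ strings). Your explicit choice of the constant $c=2$ to obtain the $2/3$ threshold is a reasonable way to resolve the paper's informal $\Omega(1)$ slack, and nothing further is needed.
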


\section{A Randomized-Constructive Lower Bound via the Lov{\'a}sz Local Lemma}\label{sec:rconstructive}

We use the following lemma both in this section and the next section.
The bulk of this lemma is an exercise from Knuth~\cite{knuth1969}; however,
we include the proof for completeness.

\begin{lemma}\label{le:trees}
Let $m\in\nat$ and $T,T_1,\ldots,T_m$ be infinite rooted trees with each node having exactly $x$ ordered children. 
\begin{enumerate}
\item
There are at most $\binom{xs}{s} \leq (ex)^s$ subtrees of $T$ that include the root and consist of exactly $s$ non-root nodes.
\item
Let $\cal F$ be the set of all forests $F$ consisting of at most $m$ trees, 
such that each tree is a subtree of a different $T_i$, and such that the total number of nodes in $F$ is $s$. 
$\cal F$ consists of at most $2^m (ex)^s$ forests.
\end{enumerate}
\end{lemma}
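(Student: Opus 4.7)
The plan is to prove both parts via injective encodings of the combinatorial objects as binary strings of controlled length and weight, and then count binary strings.

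For Part 1, I would encode a subtree $S \subseteq T$ with $s$ non-root nodes by performing a DFS preorder traversal of $S$ and, at each visited node $v$, emitting the $x$-bit \emph{child profile} of $v$: a $1$ in position $p$ iff the $p$-th ordered child of $v$ lies in $S$, and $0$ otherwise. Each non-root node of $S$ contributes exactly one $1$ (at the slot in its parent's profile corresponding to its position), so the emission contains exactly $s$ ones. A standard DFS queue-size calculation (the queue size after processing $t$ nodes equals $1 + \sum_{j \le t} k_j - t$, where $k_j$ is the number of $1$s in the $j$-th profile) forces the last (i.e., $(s+1)$-th) visited node to have an all-zero profile, pinning the position of the last $1$ to be at most $xs$. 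Truncating after the last $1$ yields a binary string of length $L \in \{s, \ldots, xs\}$ ending in $1$ with exactly $s$ ones; a decoder reconstructs $S$ by simulating the DFS and treating missing trailing bits as $0$s. Summing over $L$ via the hockey-stick identity, $\sum_{L=s}^{xs} \binom{L-1}{s-1} = \binom{xs}{s}$, and the standard bound $\binom{n}{k} \le (en/k)^k$ gives $\binom{xs}{s} \le (ex)^s$.

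For Part 2, I would encode $F \in \mathcal{F}$ as an $m$-bit header indicating which of $T_1, \ldots, T_m$ contribute a subtree to $F$ (at most $2^m$ choices), followed by the concatenation, in index order, of the \emph{un-truncated} DFS encodings of the contributing subtrees. The un-truncated encoding of a subtree with $n_i$ total nodes has length exactly $x n_i$ and contains $n_i - 1$ ones, and is self-delimiting because the DFS decoder halts precisely when its queue empties, which by the same queue-size identity occurs after exactly $x n_i$ bits. Thus for a forest with $k \le m$ contributing trees of total size $s = \sum n_i$, the concatenation is a binary string of length $xs$ with $s - k$ ones, yielding $|\mathcal{F}| \le \sum_{k=0}^{m} \binom{m}{k}\binom{xs}{s-k}$. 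Since $s - k \le s \le xs/2$ whenever $x \ge 2$, we have $\binom{xs}{s-k} \le \binom{xs}{s}$, and therefore $|\mathcal{F}| \le 2^m \binom{xs}{s} \le 2^m (ex)^s$.

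The main obstacle will be pinning down the length bound $L \le xs$ for the truncated Part-1 encoding (rather than the weaker $L \le x(s+1)$, which would only yield $\binom{x(s+1)}{s}$) and justifying the self-delimiting property required for Part 2's concatenation. Both issues reduce to the same DFS queue-size identity; once this bookkeeping is verified, the remainder is just the hockey-stick summation and the standard binomial estimate.
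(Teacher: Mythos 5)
Your proposal is correct and follows essentially the same route as the paper: encode each subtree by an ordered DFS traversal as a zero--one string with one $1$ per non-root node and length at most $xs$, giving the $\binom{xs}{s}\le(ex)^s$ bound, and handle forests by a subset of $[m]$ (your $m$-bit header) together with the concatenated DFS encodings, giving $2^m(ex)^s$. If anything, your bookkeeping (strings ending in $1$, the hockey-stick summation, and the exact count $\sum_k\binom{m}{k}\binom{xs}{s-k}$ with the monotonicity step for $x\ge 2$) is more careful than the paper's, which simply counts descriptions and notes that not all of them are realized.
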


\begin{proof}

\noindent
1.) Given a subtree of $T$ with $s$ non-root nodes, record an ordered DFS traversal using a zero to denote that a potential child is not there and a one for every forward step along an existing child. Stop the traversal at the last non-root node without recording zeros for its children. There are $x$ (potential) children each for both the root and each but the last of the $s$ non-root nodes; each of these $sx$ children appears at most once in the traversal. Therefore a string of length at most $sx$ is recorded. The string has furthermore exactly $s$ ones, one for each non-root node. Note that any two different subtrees of $T$ correspond to two different strings. We thus have an injection from the specified subtrees into the set of zero-one strings of length at most $xs$ with exactly $s$ ones. There are exactly $\binom{sx}{s}$ such strings and therefore also at most this many subtrees of $T$ with $s$ non-root nodes.
The inequality $\binom{sx}{s} < (xe)^s$ follows from Stirling's formula.

\bigskip

\noindent
2.) Let $T'$ be the ordered tree that has a root $r$ of degree $m$
and at the $i^{th}$ child of $r$ attached $T_i$ (so the $i^{th}$ node on the second
level is the root of $T_i$). There is a straight forward bijection between
forests in $\cal F$ and subtrees of $T'$ with $s$ non-root nodes.

We describe such a subtree of $T'$ by a subset of $[m]$ to specify the
children of $r$ that are not used and by a zero-one string of 
length at most $xs$ with exactly $s$ ones corresponding to a 
DFS-traversal of the remaining tree in the same manner as above.
Each forest in $\cal F$ can be uniquely described
in such a manner (but not all those descriptions correspond to a valid tree). The total number of those descriptions and therefore also the total number of forests in $\cal F$ is at most $2^m (xe)^s$.
\end{proof}

\begin{theorem}\label{th:main}
$W(k,2)\ge \frac{2^{(k-1)}}{4k}$ by a randomized-constructive proof.
\end{theorem}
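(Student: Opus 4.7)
The plan is to follow Moser's entropy-compression proof of the Lov{\'a}sz Local Lemma, cast in the Kolmogorov-complexity framework of Section~\ref{se:kolm}. Set $n=\lfloor 2^{k-1}/(4k)\rfloor$ and draw all random bits from a single binary string $R$. Use the first $n$ bits of $R$ as an initial coloring of $[n]$, then loop: while a monochromatic $k$-AP exists, pick one canonically (say lex-first) $A$ and call $\textbf{Fix}(A)$. The subroutine $\textbf{Fix}(A)$ consumes the next $k$ bits of $R$ to resample the colors of $A$, then for each $k$-AP $B$ with $B\cap A\ne\emptyset$ that is monochromatic, recursively calls $\textbf{Fix}(B)$.

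On Kolmogorov-random $R$, suppose the algorithm has performed $s$ total $\textbf{Fix}$ calls, $t$ of them top-level, consuming $n+ks$ bits. I will give a decoder reconstructing this prefix from (a)~the current coloring ($n$ bits), (b)~one bit per $\textbf{Fix}$ call recording whether the AP was all-$0$ or all-$1$ just before it ($s$ bits), and (c)~a description of the \emph{$\textbf{Fix}$ forest}---the rooted forest whose roots are the top-level calls in chronological order, with each recursive call a child of its invoker in DFS order. The decoder walks the forest in reverse DFS order: the restriction of the current coloring to the AP of the call being un-fixed is exactly the $k$ bits that call consumed, and those positions are then reset to the recorded monochromatic color; after all $s$ un-fixings, the original $n$ initial bits reappear.

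Bounding the forest description is the heart of the matter. Every recursive call's AP intersects its parent's, and Lemma~\ref{le:numof}(1) bounds the number of intersecting $k$-APs by $kn$; Lemma~\ref{le:numof}(2) bounds the number of $k$-APs in $[n]$ by $n^2/k$. Spelling out the $t$ top-level roots in order costs $t\log_2(n^2/k)$ bits; applying Lemma~\ref{le:trees}(1) separately to each of the $t$ subtrees bounds the total number of subtree shapes by $(ekn)^{s-t}$, with $O(t+\log s)$ bits for sizes and separators. The total encoding has length at most
\[
n + s + t\log_2(n^2/k) + (s-t)\log_2(ekn) + O(t+\log s+\log n),
\]
which Kolmogorov-randomness forces to be at least $n+ks-O(1)$. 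Since $n=\lfloor 2^{k-1}/(4k)\rfloor$ gives $ekn\le e\cdot 2^{k-3}<2^{k-1}$, we get $\log_2(ekn)<k-1$ by the positive constant gap $\alpha=2-\log_2 e$; rearranging yields $\alpha s \le t\log_2(n/(ek^2)) + O(t+\log s+\log n)$, forcing $s$ to be polynomially bounded in $n$. Imposing a polynomial step cutoff $T$, outputting $\textbf{I HAVE FAILED!}$ on overrun, and $O(n^2)$ verification of the final coloring (Lemma~\ref{le:numof}(2)) makes the algorithm randomized-constructive with success probability $\ge 2/3$ (since at least that fraction of length-$(n+kT)$ strings are Kolmogorov-random).

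The main obstacle I anticipate is the forest-counting step: a black-box use of Lemma~\ref{le:trees}(2) brings a prohibitive $2^{n^2/k}$ factor from its $2^m$ term, which would swamp the entire compression. Sidestepping this requires encoding only the $t$ top-level roots that actually occur---at cost $t\log_2(n^2/k)$---and applying Lemma~\ref{le:trees}(1) subtree-by-subtree rather than invoking part~(2) at all. A related subtlety is turning the inequality $\alpha s \le t\log_2(n/(ek^2))+O(\cdot)$ into a genuine upper bound on $s$: this requires a polynomial upper bound on $t$, which follows from the standard observation that each top-level call's AP must have been monochromatic when selected, so $t$ can be charged to either the initial random coloring (expectation $O(n/k^2)$ by Lemma~\ref{le:numof}(2)) or to a preceding resampling whose $O(k^2 n)$ newly-monochromatic neighbors include the new top-level AP.
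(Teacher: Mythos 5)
Your proposal is correct and is, at heart, the same argument the paper gives: Moser's entropy compression phrased via Kolmogorov complexity, with the same Fix routine, the same Fix forest, one bit per call recording the pre-resample color, the final coloring, and reconstruction of the random string (your reverse-DFS decoding is just the paper's three-phase forward decoding run backwards), with Lemma~\ref{le:numof} and Lemma~\ref{le:trees} supplying the counting. The one place you genuinely diverge is the forest-counting step, and there your diagnosis of the ``main obstacle'' is mistaken: the paper does use Lemma~\ref{le:trees}(2) black-box, and the $2^{n^2/k}$ factor is not prohibitive --- it contributes only an additive $\frac{n^2}{k}$ bits to the description, so the compression inequality becomes $s+\frac{n^2}{k}+s\lg(kn)+O(1)+n\ge n+sk$, and since $k-1-\lg(kn)=2$ for $n=\frac{2^{k-1}}{4k}$ this still forces $s\le \frac{n^2}{2k}+O(1)$; a cutoff of that (polynomial) size is all a randomized-constructive proof needs. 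Your alternative --- listing the $t$ top-level roots explicitly at cost $t\log_2(n^2/k)$ and applying Lemma~\ref{le:trees}(1) subtree by subtree --- is also sound and avoids the additive $\frac{n^2}{k}$ term, but it obliges you to bound $t$, and your expectation/charging argument is the wrong tool inside a worst-case compression argument; the clean fix is the paper's Claim~1 (any $k$-AP that is non-monochromatic before a completed Fix call is still non-monochromatic after it), which shows each $k$-AP can be a top-level root at most once, so $t\le \frac{n^2}{k}$ deterministically. With that substitution your route closes: both routes prove the same bound, the paper's with less bookkeeping, yours with a smaller threshold on the number of resamplings.
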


\begin{proof}
Let $n=\frac{2^{(k-1)}}{4k}$. 
We present a randomized algorithm to find a 2-coloring of $[n]$. Let $E_1,\ldots,E_m$ be the $k$-AP's 
of $[n]$ listed in lexicographic order. 
By Lemma \ref{le:numof}, $m=O(n^2/k)$. 

We present a simple algorithm with a parameter $s$, which we will determine later.

\noindent
{\bf MAIN ALGORITHM}

\begin{enumerate}
\item
Color $[n]$ using $n$ random bits
\item
$NUMCALLS=0$ (this will be the number of calls to $FIX$).
\item
For $i=1$ to $m$ if $E_i$ is monochromatic then $FIX(E_i)$.
\item
Output the coloring.
\end{enumerate}

\noindent
{\bf END OF MAIN ALGORITHM}

\noindent
{\bf FIX ALGORITHM}

\noindent
$FIX(E)$
\begin{enumerate}
\item
$NUMCALLS =NUMCALLS+1$.
\item
If $NUMCALLS=s$ then STOP and output {\bf I HAVE FAILED}.
\item
Recolor $E$ randomly (this takes $k$ random bits).
\item
While there exists a monochromatic $k$-AP that intersects $E$ 
let $E'$ be the lexicographic smallest such $k$-AP and call $FIX(E')$.
\end{enumerate}

\noindent
{\bf END OF FIX ALGORITHM}

\bigskip

We leave the following easy claims to the reader:

\noindent
{\bf Claim 1:}
For all calls to $FIX$ that terminate the following holds:
all of the $k$-AP's that were not monochromatic
before the call are not monochromatic after the call.

\noindent
{\bf Claim 2:}
If the algorithm outputs a coloring then it is a proper coloring.

\bigskip

We find a value for the parameter $s$ such that $s$ is polynomial in $n$ and $k$ and such that the 
probability of the algorithm's success is at least  $2/3$. 
With parameter $s$ the algorithm uses at most $n+sk$ random bits.
We can think of the algorithm as a deterministic one which takes an additional
$n+sk$ bit string as input to use in place of the random bits.
Let $z=z_0z_1\cdots z_s$ denote that string. The first $n$ bits are used for the
initial color assignment, and the remaining bits are used for the reassignments as
needed. 

Let $z=z_0z_1\cdots z_s$ be a Kolmogorov random string relative to $k,n$.
We will show that if the algorithm is run with $z$ supplying the random bits
then the result will be a proper coloring of $[n]$.
Since over 2/3 of all strings of length $n+sk$ are Kolmogorov random relative to $k,n$ 
this will prove that the algorithm succeeds with probability $\ge 2/3$.

Assume, by way of contradiction, 
that the algorithm goes through $s$ calls to $FIX$. 
We will pick a value of $s$ so that this leads to a contradiction.

\begin{definition}
The {\it FIX-FOREST} is the forest of calls to $FIX$.
We take the children of a node to be ordered in the same order the
procedure FIX was called.
The nodes are labeled by what monochromatic $k$-AP they were called with and
what color (a bit) the $k$-AP was before the call.
\end{definition}

\begin{definition}
For $1\le i\le m$ we define a tree $T_i$ as follows.
\begin{itemize}
\item
The root is labeled with $E_i$ (the $i^{th}$ $k$-AP in lexicographic order).
\item
If a node is labeled with a $k$-AP $E$ then the children
are the $k$-AP's that intersect $E$ in lexicographic order.
\end{itemize}
\end{definition}

Putting all this together we get:

\begin{fact}\label{fa:FIX}~
\begin{enumerate}
\item
By Lemma~\ref{le:numof} every node of $T_i$ has at most $kn$ children.
\item
By Claim 1 the FIX-FOREST has less than $n^2/k$ trees
\item
All trees in the forest are subtrees of different subtrees $T_i$.
\end{enumerate}
\end{fact}

This makes it possible to apply Lemma~\ref{le:trees} 
and obtain that the number of different FIX-FOREST structures is
at most $2^{\frac{n^2}{k}} (kn)^s$. From this we get that, given $n$ and $k$, each
FIX-FOREST can be described by $\frac{n^2}{k} + s\lg(kn) + O(1)$ bits 
for its structure and another $s$ bits for the color labels. Now let
$w$ be the coloring after $s$ calls to $FIX$ are performed. Note
that $w$ can be described with $n$ bits. The next claim shows
that taking all these descriptions it is possible to reconstruct
the Kolmogorov random string $z$.

\bigskip 

\noindent
{\bf Claim 3:} Given $n,k$ the FIX FOREST and $w$ one can recover $z$.

\noindent
{\bf Proof of Claim 3}

From the FIX FOREST we can obtain:

\begin{itemize}
\item
a description of the $k$-AP $a_i$ that the $i^{th}$ call was made on.
\item
the color $c_i$ of $a_i$ when the $i^{th}$ call to FIX was made.
\end{itemize}

We recover the $z$'s in three phases.
%(We do a full example in the appendix.)

\medskip 

\noindent{\bf Phase I (just use the $a_i$'s but not the $c_i$'s):}
Simulate the Coloring Algorithm using 
the symbols $z_i^j$ where 
($0\le i\le s$, if $i=1$ then $1\le j\le n$, if $i\ge 2$ then $1\le j\le k$)
to represent the $j$th bit of $z_i$.
Note that we do not know the actual colors so we really do use (say) $z_3^4$
and not RED (or more formally 0 or 1).
Since we have $a_i$ we can  (and do)
keep track of the coloring of $[n]$ after each call to $FIX$,
in terms of the symbols $z_i^j$.
This creates a table of $z_i^j$'s.

For example, if $n=15$ then the first row will be:

\[
\begin{array}{c||c|c|c|c|c|c|c|c|c|c|c|c|c|c|c|}
1&z_0^1&z_0^2&z_0^3&z_0^4&z_0^5&z_0^6&z_0^7&z_0^8&z_0^9&z_0^{10}&z_0^{11}&z_0^{12}&z_0^{13}&z_0^{14}&z_0^{15}\cr
\end{array}
\]

If $k=4$ $a_1=(4,7,10,13)$, i.e., the first call to FIX was to $(4,7,10,13)$, then the second row will be

\[
\begin{array}{c||c|c|c|c|c|c|c|c|c|c|c|c|c|c|c|}
2&z_0^1&z_0^2&z_0^3&z_1^1&z_0^5&z_0^6&z_1^2&z_0^8&z_0^9&z_1^{3}&z_0^{11}&z_0^{12}&z_1^{4}&z_0^{14}&z_0^{15}\cr
\end{array}
\]

\smallskip

\noindent{\bf Phase II (use the $c_i$'s to determine $z_i^j$'s):}
For all $1\le i\le s$, right before the $i^{th}$ call to $FIX$,
$k$-AP $a_i$ was monochromatic; all $k$ vertices of $a_i$ were colored $c_i$.
For $1\le j\le i-1$  let $V_j$ be the vertices of $a_i$
that were most recently colored by $z_j$ (note that $V_j$ could be empty).
By Phase I we know which bits of $z_j$ colored which vertices of $V_j$.
We now know that those bits are $c_i$.
For each $i$ we have recovered $k$ bits of $z$.
Since there are $s$ calls to $FIX$ this phase recovers $sk$ bits.

\smallskip
\noindent{\bf Phase III (use $w$):}
For each $x\in [n]$ there is an $i,j$ so that $x$ was colored
$z_i^j$ and never recolored. We now know the $z_i^j=w^x$.
This phase recovers $n$ bits of $z$.

The phases all together recover $n+sk$ bits of $z$. Since $|z|=n+sk$
all of $z$ is recovered.

\noindent
{\bf End of Proof of Claim 3}

\medskip 

By Claim 3, $z$ can be described using $n,k$ the FIX FOREST and $w$.
Since $w$ can be described by $n$ bits and since Fact~\ref{fa:FIX}.4 tells us that the FIX FOREST can be described
with $s + \frac{n^2}{k} + s\lg(kn) + O(1)$ bits we get a description of $z$ of size $s + \frac{n^2}{k} + s\lg(kn) + O(1) + n$.

On the other hand we assumed $z$ to be Kolmogorov random relative to $k,n$ which implies 
that any description of $z$ has to have length at least $n+sk+O(1)$. Hence

$$s + \frac{n^2}{k} + s\lg(kn)+O(1) + n \ge n+sk$$

$$\frac{n^2}{k} + O(1)  \ge sk-s-s\lg(kn)$$

%$$\frac{n^2}{k} + O(1)  \ge s(k-1-\lg(kn))$$

$$s \le \frac{\frac{n^2}{k} + O(1)}{k-1-\lg(kn)} < n^2/k^2 + O(1)$$

Now choosing $s \ge \frac{n^2}{k^2} + O(1)$ leads to the desired contradiction.
\end{proof}

\section{A Deterministic-Constructive Lower Bound by\\ Derandomizing the Lov{\'a}sz Local Lemma}

\begin{theorem}\label{thm:pconstructive}
Fix $\eps>0$.  $W(k,2) \ge \frac{2^{(k-1) (1 - \eps)}}{ek}$ by a deterministic-constructive proof.
\end{theorem}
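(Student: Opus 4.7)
The plan is to derandomize the Moser--Tardos-style algorithm of Theorem~\ref{th:main} by applying the derandomization of Chandrasekaran, Goyal, and Haeupler~\cite{LLLd}. At a high level, the randomness used by MAIN and FIX (both the initial coloring and the $k$ bits per FIX call) is replaced by a deterministic source specified by a seed $y$ of length only $L = O(k/\eps)$, and the deterministic algorithm enumerates exhaustively over all $2^L$ seeds until one produces a proper coloring. The weakening of $n$ from $\frac{2^{k-1}}{4k}$ to $\frac{2^{(k-1)(1-\eps)}}{ek}$ is exactly the price paid for this derandomization: it creates $\Theta(\eps k)$ bits of ``slack'' per FIX call, which is what makes the short-seed encoding work.

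First I would set $n = \lfloor \frac{2^{(k-1)(1-\eps)}}{ek} \rfloor$ and keep the FIX-call cap $s = O(n^2/k^2)$ as in Theorem~\ref{th:main}, leaving MAIN and FIX unchanged except that each coin flip is read off a prescribed position of the output $G(y)$ of a short-seed generator $G \colon \bits{L} \to \bits{n+sk}$. Correctness reduces to re-running the Kolmogorov encoding of Claim~3 with $y$ in the role of $z$: if the algorithm fails after $s$ FIX calls on seed $y$, then $y$ is recoverable from the FIX-FOREST, the color labels $c_i$, and the final coloring $w$. For the appropriate CGH source the total description of $y$ fits in strictly fewer than $L$ bits, contradicting the Kolmogorov-randomness of $y$; since most $y\in\bits{L}$ are Kolmogorov-random, most seeds succeed. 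The derandomized algorithm then enumerates all $2^L$ seeds, runs the (now deterministic) MAIN/FIX on each, checks properness in $\poly(n)$ time by Lemma~\ref{le:numof}, and outputs the first success, giving total running time $2^L \cdot \poly(n) = 2^{O(k/\eps)}$.

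The main obstacle, and the step where I would rely most heavily on CGH, is the design of $G$ and the corresponding refined encoding. The naive description from Theorem~\ref{th:main} already contains a $\frac{n^2}{k}$-bit term just to specify which $k$-APs are FIX-FOREST roots (via Lemma~\ref{le:trees}), and for $n$ exponential in $k$ this dwarfs the target $L = O(k/\eps)$. The CGH derandomization circumvents this by using a structured $G$ in which the initial coloring---and thus the set of forest roots---is itself implicit in $y$ and so need not be listed separately in the description. Once this is done, each FIX call consumes $k$ bits of randomness from $G(y)$ but costs only $\log(kn) + O(1)$ bits of description, leaving a net gain of $k - \log(kn) - O(1) = \Theta(\eps k)$ bits per call (since $\log(kn) \approx (1-\eps)k$ for our $n$). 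Summing over the $s = O(n^2/k^2)$ FIX calls gives a gain of $\Theta(\eps n^2/k)$, which more than absorbs the $O(k/\eps)$ cost of the seed: the $\eps$-loss in the stated bound on $n$ is exactly what makes this accounting balance. With CGH's source and encoding imported, the rest of the argument is a routine adaptation of Section~\ref{sec:rconstructive}.
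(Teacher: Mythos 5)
There is a genuine gap, and it sits exactly at the step you defer to ``the appropriate CGH source.'' The incompressibility argument of Theorem~\ref{th:main} derives a contradiction because the algorithm's coins $z$ have length $n+sk$ and a failed run lets you describe all $n+sk$ of those bits using only $n + \frac{n^2}{k} + s\lg(kn) + O(1)$ bits. If you instead feed the algorithm $G(y)$ for a seed $y$ with $|y| = L = O(k/\eps)$, this argument gives you nothing: a failed run at best yields a short description of the string $G(y)$, which is no contradiction because $G(y)$ is \emph{always} compressible to $L + O(1)$ bits (it is the output of a short-seed generator), and it certainly does not yield a description of $y$ itself of length below $L$ --- the ``failure evidence'' you propose to encode (the FIX-FOREST on up to $s = O(n^2/k^2)$ calls, the colors $c_i$, and the final coloring $w$) costs $\Omega(n)$ bits, i.e.\ exponentially more than $L$, and your per-call accounting of a net gain of $\Theta(\eps k)$ bits is an accounting of savings relative to $|G(y)| = n+sk$, not relative to $|y|$. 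So Kolmogorov randomness of the seed cannot be the engine of the proof; what you would actually need is a proof that $G$ fools the specific statistical test ``the MT algorithm fails,'' and you neither specify $G$ nor supply such a proof. The claim that the forest roots are ``implicit in $y$ and so need not be listed'' does not rescue this, because the contradiction you want is about compressing $y$, and $y$ admits no compression to beat in the first place.

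The paper's (and CGH's~\cite{LLLd}) derandomization proceeds by a different mechanism, and its key idea is missing from your sketch: one does not keep the full execution history but \emph{truncates} to partial witness trees of constant size. Concretely, the paper fixes $t = O(\eps^{-1.01})$, enumerates the set $Y$ of all trees with root a subset of a $k$-AP, non-root nodes labeled by $k$-APs intersecting their parent, disjoint labels on each level, and between $t$ and $2t$ non-root nodes; by Lemma~\ref{le:trees} there are only $n^{O(t)}$ of them. The $\eps$-slack makes the expected number of such trees consistent with a uniformly random table $T \colon [n] \times [2t] \to \{0,1\}$ less than $1/3$, so the method of conditional expectations deterministically builds a table consistent with no tree of $Y$, in time polynomial in $n$. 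The separate combinatorial claim that a run of the recoloring algorithm in which some number needs more than $t$ colors would produce a tree in $Y$ consistent with $T$ then forces termination with a proper coloring. Your proposal has no analogue of this truncation step (you retain the cap $s = O(n^2/k^2)$ and the full forest), and without it the object to be fooled has description size polynomial in $n$, which is precisely why no seed of length $O(k/\eps)$ --- however cleverly structured --- can be argued correct by the encoding argument you outline. The high-level intuition (pay an $\eps$ factor in $n$ to buy slack for a derandomization via CGH) matches the paper, but the proof as sketched cannot be completed along the route you describe.
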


\begin{proof}
Let  $n=\frac{2^{(k-1)(1-\eps)}}{ek}$. (We assume $n$ is an integer; the modifications to make this rigorous are easy but cumbersome.)
We will present a deterministic algorithm that always produces
a proper 2-coloring of $[n]$ and runs in time $n^{O(\eps^{-1.01})}$ which is polynomial as long as $\eps$ is any fixed constant. The algorithm proceeds in stages.

Let $t$ be a parameter to be named later. It will be $O(\eps^{-1.01})$.

\bigskip

\noindent
{\bf Stage 1: List out Trees}

We create by exhaustive enumeration a list of the following set of trees $Y$:\\
For each $k$-AP $E$, for each subset $S$ of $E$, take all possible labeled trees that satisfy the following properties:

\begin{enumerate}
\item
the root is labeled with $S$,
\item
each non-root node is labeled with a $k$-AP of $[n]$,
\item \label{prop:neighbors}
the labels of each child of a node $B$ share a number with the label of $B$,
\item \label{prop:levelsdisjoint}
the labels of nodes on the same level are disjoint,
\item
there are between $\thresh$ and $2\thresh$ non-root nodes.
\end{enumerate}

\begin{verbatim}
EXAMPLE: A few trees in Y for n=7,k=3,t=1.5

23            246                35
 |             |                 |  \
234           567               123  567
               |		
23            135
 |
357            34                4
               |  \	             |
 3            123  456          147
 |                               |
123            14               357
 |             |  \
123           135  246

\end{verbatim}

To bound the running time of this and the next stage we need to check 
that the number of trees in $Y$ is always polynomial in $n$. 
For this recall that by Lemma~\ref{le:numof} there are at 
most $n^2/k$ different $k$-AP's which gives us at most $2^k n^2/k$ possible roots. 
If the root is fixed then by property \ref{prop:neighbors} and again Lemma~\ref{le:numof} 
we know that each tree in $Y$ is a subtree of the infinite tree in which each $k$-AP has as children the $< nk$ $k$-APs it intersects with. 
Using Lemma~\ref{le:trees}.2 with $x=kn$ and $s=t$ we obtain that there are at most 
$$(nke)^{2\thresh}$$
such subtrees of size $2\thresh$. Therefore the total number of trees $|Y|$ is at most 
$$\frac{2^k n^2}{k} (nke)^{2\thresh} \thresh \le n^3 \cdot (n^2)^{2\thresh} n \le n^{O(\thresh)} = n^{O(\eps^{-1.01})}.$$
Hence this stage of the algorithm runs in polynomial time for any fixed constant $\eps>0$. 

\bigskip

\noindent
{\bf Stage 2: Creation of a good table}

Similar to the proof of Theorem \ref{th:main} we create a table with a sequence of colors for every number. A {\it table} is a map $T: [n] \times [\thresh] \into \{0,1\}$ in which we view each row as a sequence of colors for its (column)-number. We will be looking at colorings of the numbers on the nodes of the tree that is guided by a table $T$. $T(x,t)$ will tell us how to color the number $x$ the $t^{th}$ time we look for a color of $x$ when we process the tree level-by-level from leaf-to-root. More formally we assign each number $x$ in the label of a node $v \in \tau$ the color 
$$T(x,1 + \hbox{ number of nodes below $v$ whose label contain $x$}).$$
Given a tree $\tau$ and a coloring of its numbers guided by table $T$ say $\tau$ is \emph{consistent} with $T$ iff all labels of $\tau$ are colored monochromatically. 

Note that because of property \ref{prop:levelsdisjoint} each color in the table $T$ gets used only once during this process. Thus if all colors in $T$ are chosen independently at random each label of a node gets monochromatic independently with probability $2^{-(k-1)}$. The probability for a tree in $Y$ to be consistent is thus at most $2^{-(k-1)i}$ where $i$ is the number of non-root nodes. Having this and computing $(\frac{n^2}{k} 2^k (nke)^i)$ as an upper bound for the number of trees with $i$ non-root nodes as in Stage $1$ we get that the expectation of the number of consistent trees $X$ is at most

$$E[X] \leq \sum_{i=\thresh}^{2\thresh} (n^2 2^k (nke)^i) 2^{-(k-1)i} = n^2 2^k \sum_{i=\thresh}^{2\thresh} 2^{-(k-1) \eps i} < n^2 2^k \thresh 2^{-(k-1) \eps \thresh} \leq 2^{O(k)} t 2^{-k \eps t}.$$

Thus the expectation is $< 1/3$ if $\thresh = O(\eps^{-1.01})$ is picked large enough. 
Markov's inequality proves that with probability at least $\frac{2}{3}$ no tree in $Y$ is consistent with a randomly chosen table. We efficiently construct such a table using the method of conditional expectations in the following algorithm:

\bigskip

\noindent
{\bf TABLE CREATION ALGORITHM}

\begin{itemize}
\item
For all $x=1$ to $n$, For all $y=0$ to $2t$
\begin{itemize}
\item Set $T(x,y) = 1/2$
\end{itemize}
\item
For all $x=1$ to $n$, For all $y=0$ to $2t$
\begin{itemize}
\item 
Set $T(x,y) = 0$\\
For all $\tau \in Y$
\begin{itemize}
	\item Compute $p_{\tau,0} = \prod_{v \in \tau} (\prod_{i \in \hbox{label}(v)} \hbox{color}(i) + \prod_{i \in \hbox{label}(v)} (1 - \hbox{color}(i)))$
	\item (Here $\hbox{color}(i)$ corresponds to the entry from $T$ that is assigned to this number $i$ in the label of node $v$ when the coloring of $\tau$ is guided by $T$. Note that $p_{\tau,0}$ corresponds exactly to the probability that every node-label in $\tau$ becomes monochromatic if colors are filled in from the table $T$ into $\tau$ level-by-level from leaf-to-root while choosing a random color instead of any $1/2$.)
\end{itemize}
Let $E_0 = \sum_{\tau \in Y} p_{\tau,0}$
\item 
set $T(x,y)=1$ and compute all $p_{\tau,1}$ and $E_1$ similarly to the last step
\item 
if $E_0 < E_1$ than $T(x,y)=0$ else $T(x,y)=1$ in order to minimize the expectation.
\end{itemize}
\end{itemize}

\noindent
{\bf END OF TABLE CREATION ALGORITHM}

\bigskip

For the analysis of the table creation we see that $E_0$ and $E_1$ are exactly the expected number of consistent trees in $Y$ if we set $T(x,y)$ to $0$ or $1$ respectively. Because of our choice of $t$ from above we get that in the beginning this expectation is $E = \frac{E_0 + E_1}{2}<2/3$. By always choosing the color that minimizes this expectation the invariant $\frac{E_0 + E_1}{2}<2/3$ is preserved throughout the algorithm. When finally all entries of $T$ are chosen, no randomness remains and the invariant implies that no tree with properties 1-5 is consistent with $T$. This stage of the algorithm takes $O(4tk|Y|)$ time for each of the $2tn$ iterations and therefore runs in time polynomial in $n$.

\bigskip

\noindent
{\bf Stage 3: Run a Recoloring Algorithm using Colors from the Table}

\begin{enumerate}
\item
Initially color $[n]$ using the first column of $T$.
\item
WHILE there is a monochromatic $k$-AP $E$\\
   \mbox{}\hspace{0.5cm} recolor the numbers in $E$ using for each number its next unused color from $T$
\end{enumerate}

This completes the algorithm. In the rest of this section we show that the algorithm terminates without requesting more than $\thresh$ colors for one number which will be enough to argue a quick termination. Note that because of the termination condition of the algorithm no proof of correctness is needed. 

\bigskip

\noindent
{\bf Claim:}
Each number gets recolored at most $\thresh$ times.

\noindent
{\bf Proof of Claim}

Lets look at the sequence of $k$-APs as picked by the algorithm. 
For each $k$-AP $E$ in this sequence and each  subset $S$ of $E$ we construct a tree labeled by subsets of 
$[n]$ by starting with a root with label $S$. Going back in the sequence we iteratively take the next $k$-AP $B$ and if there is a node in the tree whose label shares a number with $B$ we create a new node with label $B$ and attach it to the lowest such node breaking ties arbitrarily.

Let $Z$ be the set of trees that can be constructed from the run of the algorithm using the table $T$. We prove the claim in the following two steps:
\begin{enumerate}
\item
All trees in $Z$ are consistent with the table $T$.
\item
If a number got recolored more than $\thresh$ times then there exists a tree $\tau \in Y \cap Z$ which leads to the desired contradiction.
\end{enumerate}

\noindent
{\bf All trees in $Z$ are consistent with the table $T$:}

We want to argue that the colors that gets filled from $T$ into a $k$-AP $E$ when consistency is checked are exactly the same entries in $T$ that the algorithm sees before it recolors this $k$-AP $E$, i.e., both are monochromatic. Focusing on one number $x \in E$ we directly see that the entry from $T$ that is used to recolor $x$ is the entry with the number $i$ from the column in $T$ that belongs to $x$, where $i$ is the number of times a color for $x$ was needed before which is exactly one plus the number of $k$-APs containing $x$ that got recolored before. 
Note that all these $k$-APs appear in a tree below $E$ which is the reason why when consistency is checked for also exactly the entry $i$ is filled into $x$ (see definition of consistency). This proves that any tree that got created from a run with table $T$ is consistent with $T$.

\medskip 

\noindent
{\bf If a number got recolored more than $\thresh$ times then a tree $\tau \in Y$ is constructed:}

For sake of contradiction we assume that a number got recolored more than $\thresh$ times and argue that in this case a tree $\tau \in Y$ gets constructed. Note that by construction all trees fulfill the properties $1$-$4$. Hence it remains that a tree of size between $\thresh$ and $2\thresh$ is generated from the trace. For this let $\tau$ be the the smallest tree in $Z$ of size $s \geq \thresh$. Such a tree exists because generating a tree from the 
$\thresh^{\rm th}$ time a number got recolored produces a tree of size at least $\thresh$. If the label $S$ of the root of $\tau$ consists of just one number then because of property 3 and 4 it has only one child and the tree generated choosing this child as a root label has size $s-1$. Otherwise take one number $x \in S$ and look at the trees generated with $\{x\}$ and $S - \{x\}$ as a root label. One of them has size at least $s/2$ since each node in the tree generated by $S$ appears in at least one of the new trees. In either case the minimality of $\tau$ -- that the remaining tree of size either $s-1$ or $s/2$ has to be smaller than $\thresh$ -- implies $s\leq 2\thresh$. This shows that the tree $\tau$ that is constructed from the trace fulfills all 5 properties and is therefore a tree from $Y$ that is consistent with the table $T$. This is a contradiction to the way we constructed the table $T$ in stage 2.

\noindent
{\bf End of Proof of Claim}

\bigskip

It is easy to see that with the guarantee given by this claim the algorithm runs for at most $O(\thresh n)$ time in this stage and terminates with a proper coloring. With all previous stages running in time polynomial in $n$ the entire algorithm does so and thus fulfills the properties of a deterministic-constructive proof, finishing the proof of Theorem \ref{thm:pconstructive}.  
\end{proof}

\begin{corollary}\label{co:constLLL}
$W(k,2) \ge \frac{2^{(k-1)}}{ek} - 1$ by a randomized-constructive proof.
\end{corollary}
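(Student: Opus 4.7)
The plan is to reuse the three-stage algorithm of Theorem \ref{thm:pconstructive} but to replace the deterministic table construction in Stage 2 with a uniformly random draw. Since the only role of Stage 1 (enumerating $Y$) was to support the conditional-expectation pass of Stage 2, sampling the table at random lets us skip both Stage 1 and Stage 2 entirely, so we never pay the $n^{O(\epsilon^{-1.01})}$ cost of enumerating $Y$; the family $Y$ still appears in the analysis but is never constructed by the algorithm.

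Concretely, set $n = \lfloor 2^{(k-1)}/(ek) \rfloor - 1$ and pick $t = \Theta(n)$ with the constant determined below. First I would sample the table $T : [n] \times [2t] \to \{0,1\}$ uniformly, then run Stage 3 of Theorem \ref{thm:pconstructive}'s algorithm using $T$. If during the execution some number ever requests a color beyond column $t$, halt and output ``I HAVE FAILED''; otherwise output the current coloring, which is automatically proper by the termination condition of Stage 3.

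For the success analysis I invoke the Claim in Stage 3 of Theorem \ref{thm:pconstructive}, which is purely combinatorial: any failure (some number recolored more than $t$ times) forces a tree $\tau \in Y$ of size between $t$ and $2t$ to be consistent with $T$. Property 4 of $Y$ (disjointness across levels) together with independent sampling of $T$ ensures that a tree with $i$ non-root nodes is consistent with probability at most $2^{-(k-1)i}$. Combining this with the Stage 1 count $(n^2/k) \cdot 2^k \cdot (nke)^i$ of such trees, and noting that for our choice of $n$ we have $(nke) \cdot 2^{-(k-1)} = 1 - ek/2^{k-1}$, gives
$$E[X] \le \frac{n^2 \cdot 2^k}{k} \sum_{i=t}^{2t} \left(1 - \frac{ek}{2^{k-1}}\right)^i \le \frac{2t \cdot n^2 \cdot 2^k}{k} \exp\!\left(-\frac{t \cdot ek}{2^{k-1}}\right),$$
which drops below $1/3$ as soon as $t = \Theta(2^{k-1}/(ek)) = \Theta(n)$ is chosen with a sufficiently large hidden constant. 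Markov's inequality then yields success probability at least $2/3$, and Stage 3 performs at most $tn$ recolorings at cost $O(n^2)$ each, for a total running time of $O(n^4)$, polynomial in the output size.

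The main obstacle is that at $\epsilon = 0$ the per-level decay in the expectation collapses from $2^{-(k-1)\epsilon}$ (as in Theorem \ref{thm:pconstructive}'s Stage 2) to merely $1 - ek/2^{k-1}$, forcing $t$ to grow like $n$ rather than like $\epsilon^{-1.01}$. This inflation would be fatal to the derandomization of Stage 2 (whose cost is roughly $n^{O(t)}$), but it is harmless for the randomized variant, since the algorithm neither enumerates $Y$ nor performs conditional-expectation passes; the larger $t$ only inflates the random table and Stage 3's inner loop, both of which remain polynomial in $n$.
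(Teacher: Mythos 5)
Your algorithm is essentially the paper's own proof: run Stage 3 of Theorem \ref{thm:pconstructive} with fresh uniformly random colors in place of the prepared table, declare failure as soon as some number needs more than $t$ colors, and obtain the success probability from the Stage 2 expectation-plus-Markov computation combined with the purely combinatorial Claim of Stage 3. The only real difference is cosmetic: the paper keeps the $\eps$-parameterization and sets $\eps=\Theta(1/(n'k))$ with $t=O(\eps^{-1.01})$, while you work directly at $n=2^{(k-1)}/(ek)-1$ and bound the sum with ratio $1-ek/2^{k-1}$.

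However, one quantitative step fails as stated: choosing $t=\Theta(n)$ with an absolute hidden constant does not push $E[X]$ below $1/3$. By your own bound $E[X]\le \frac{2t\,n^2 2^k}{k}\exp\!\left(-\frac{t\,ek}{2^{k-1}}\right)$, and since $2^k/k\approx 2en$ the prefactor is roughly $4etn^3$ while the decay per unit of $t$ is only $e^{-t/n}$; with $t=Cn$ this is about $4eCn^4e^{-C}$, which exceeds $1/3$ for every fixed $C$ once $k$ (hence $n$) is large. You need $t/n \gtrsim \ln(tn^3)$, i.e. $t=\Theta(n\log n)=\Theta(nk)$; the paper's choice $\eps=\Theta(1/(n'k))$, $t=O(\eps^{-1.01})$ corresponds to $t\approx (nk)^{1.01}$ and sidesteps the issue. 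The repair is harmless --- Stage 3 still makes at most $tn$ recolorings at cost $O(n^2)$ each, so the running time becomes $O(n^4k)$, still polynomial, and the final bound is unchanged --- but your parameter claim and the stated $O(n^4)$ running time need this correction.
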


\begin{proof}
The algorithm used to achieve this bound is simply stage 3 of the algorithm above but instead of using the colors from a carefully prepared table $T$ an independent uniformly random color is chosen each time a new color is needed. If more than $t$ new colors are requested for any number the algorithm stops and reports its failure. 
This is the randomized algorithm of Moser and Tardos~\cite{mosertardos} which is very similar and actually encompasses 
Moser's algorithm given in Section \ref{sec:rconstructive}. 
For its analysis we note that the only reason why Theorem \ref{thm:pconstructive} does not give us the bound of this theorem is because we can not make $\eps$ smaller with $k$. The reason for this is that the running time of stage 1 and 2 is $n^{O(\eps^{-1.01})}$ time which forces $\eps$ to be a fixed constant. Since the randomized algorithm only runs stage 3 we can choose $\eps = \Theta(\frac{1}{n'k})$ where $n' = \frac{2^{(k-1)}}{ek}$ such that $2^{-(k-1)\eps} \geq e^{-n'} \geq (1 - 1/n')$. This still keeps the running time of stage 3 to be polynomial, more specifically $O(tn)=O(n \eps^{1.01})=O(n (n'k)^{1.01})$. The success probability comes directly from the analysis for stage 2. There is already stated that the probability for random colors to form a good table is at least $2/3$. Thus also the success probability of the described algorithm to reports a proper 2-coloring is as required by the definition of randomized-constructive. With such a small $\eps$ the lower bound implied by this randomized algorithm now becomes $W(k,2) \ge \frac{2^{(k-1)(1-\eps)}}{ek} \geq \frac{2^{(k-1)}}{ek} (1 - 1/n') = \frac{2^{(k-1)}}{ek} - 1$ as desired. 
\end{proof}

\section{Acknowledgement}

We would like to thank Robin Moser for his brilliant talk at STOC 2009 which inspired this paper. We would also like to thank Thomas Dubois, Mohammad Hajiaghayi and Larry Washington for proofreading and helpful comments. Last but not least we would like to thank the anonymous reviewer(s) for catching several minor mistakes and for helpful suggestions which greatly improved the presentation of this paper.

\end{document}